\theoremstyle{plain}
\newtheorem{theorem}{Theorem}[section]
\newtheorem{proposition}[theorem]{Proposition}
\newtheorem{lemma}[theorem]{Lemma}
\newtheorem{corollary}[theorem]{Corollary}
\newtheorem{remark}[theorem]{Remark}
\newtheorem{assumption}[theorem]{Assumption}
\newtheorem{example}[theorem]{Example}
\newcommand{\Spec}{\mathop{\mathrm{Spec}}\nolimits}
\newcommand{\R}{\mathbb R}
\DeclareMathOperator{\Dom}{Dom}
\begin{document}


\setlength{\baselineskip}{15pt}

\date{}

\title
{\Large \sc Absence of Embedded Eigenvalues for Non-Local Schr\"odinger Operators}
\vspace{1.5cm}
\author{
\small Atsuhide Ishida \\[0.1cm]
{\small\it  Katsushika Division, Institute of Arts and Sciences} \\[-0.4ex]
{\small \it  Tokyo University of Science, Tokyo, 125-8585, Japan}    \\[-0.4ex]
{\small  {\tt aishida@rs.tus.ac.jp}   }\\[0.5cm]
\small J\'ozsef L\H{o}rinczi \\[0.1cm]
{\small\it Alfr\'ed R\'enyi Institute of Mathematics }    \\[-0.4ex]
{\small\it Re\'altanoda utca 13-15, 1053 Budapest, Hungary}      \\[-0.4ex]
{\small  {\tt lorinczi@renyi.hu}   }\\[0.5cm]
\small Itaru Sasaki \\[0.1cm]
{\it \small Department of Mathematics, Shinshu University} \\[-0.4ex]
{\it \small Matsumoto, Asahi 3-1-1, Japan} \\[-0.4ex]
{\small {\tt  isasaki@shinshu-u.ac.jp}} \\[-0.4ex]}

\maketitle

\bigskip

\begin{abstract}
\noindent
We consider non-local Schr\"odinger operators with kinetic terms given by several different types of functions
of the Laplacian and potentials decaying to zero at infinity, and derive conditions ruling embedded eigenvalues
out. Our goal in this paper is to advance techniques based on virial theorems, Mourre estimates, and an extended
version of the Birman-Schwinger principle, previously developed for classical Schr\"odinger operators but thus
far not used for non-local operators. We also present a number of specific cases by choosing particular classes
of kinetic and potential terms, and discuss existence/non-existence of at-edge eigenvalues in a
basic model case in function of the coupling parameter.

\medskip
\noindent
\emph{Key-words}: non-local Schr\"odinger operators, embedded eigenvalues, resonances

\medskip
\noindent
2010 MS Classification: primary 47A75, 47G30; secondary 34L40, 47A40, 81Q10
\end{abstract}

\medskip

\makeatletter
\renewcommand\@dotsep{10000}
\makeatother

\section{Introduction}

Classical Schr\"odinger operators $H = -\frac{1}{2}\Delta + V$, featuring the Laplacian $\Delta$ and a multiplication operator
$V$ called potential continues to be a much researched subject in mathematical physics. When the kinetic term is changed for a
suitable pseudo-differential operator, the appropriately defined sum gives rise to a non-local Schr\"odinger operator. Operators
of this type attract increasing attention for their interest in functional analysis and stochastic processes, and their relevance
in various models in mathematical physics.

In \cite{HIL12a} we introduced a class of non-local Schr\"odinger operators $H = H_0 + V$ with kinetic term $H_0=\Phi(-\Delta)$
given by a so-called Bernstein function $\Phi$ of the Laplacian (for details see Remark \ref{counter} below), and with Kato-class
or more singular potentials.
This has the special interest that Bernstein
functions describe the L\'evy-Khintchine exponents of subordinators, and thus an analysis of the semigroup $\{ e^{-tH}: t\geq 0\}$
via Feynman-Kac techniques becomes possible by using subordinate Brownian motion. Specific choices include key operators in
mathematical physics, which will be our main examples in this paper, such as fractional (i.e., massless relativistic) Schr\"odinger
operators with $H_0 = (-\Delta)^{\alpha/2}$, $0 < \alpha < 2$, relativistic Schr\"odinger operators with $H_0 = (-\Delta +
m^{2/\alpha})^{\alpha/2}- m$, where $m \geq 0$ is the rest mass of the particle, and many others involving further applications
such as anomalous transport. The relativistic (square root Klein-Gordon) operator obtained for $\alpha = 1$ and $m > 0$ is an
early motivation for the study of spectral properties of such non-local Schr\"odinger operators \cite{W74,H77,CMS,LS10}. For further
interest in these operators we refer to \cite{KM,BL19a,BL19b,DLN}.

As it is well-known already from classical Schr\"odinger operators, for potentials decaying to zero at infinity the spectrum of
$H$ can be very intricate, involving a rich phenomenology. Going back to a paper by Wigner and von Neumann published in 1929, it
is also known that eigenvalues embedded in the absolutely continuous spectrum may exist. This is a long range effect due to a
combination of slow decay at a rate $O(1/|x|)$ and oscillations of the potential. The study of such potentials has intensified
over the decades, leading even to experimental realisations \cite{C}; for summaries see \cite{RS3,EK} and the references therein.

The related problem of non-existence of embedded eigenvalues is of an equal interest, triggered by a pioneering result by Kato
\cite{K59} showing that if $V(x) = o(1/|x|)$, then no embedded eigenvalues exist above the edge of the continuous spectrum. Also,
it is a fundamental question if for a given potential a bound state exists at the spectral edge, i.e., whether zero may be an
eigenvalue. This is a borderline case giving rise to a phenomenology of its own. For instance, the occurrence of zero-resonances
is known to be a key condition for the Efimov effect, leading to the emergence of a countable set of bound states in an interacting
three-particle quantum system, with eigenvalues accumulating at zero. On the other hand, in case no zero-eigenvalues occur, specific
dispersive estimates on the time evolutions of projections to the continuous spectrum under the unitary Schr\"odinger semigroup can
be obtained yielding Strichartz bounds \cite{Sch07}, or in another context, time operators exist \cite{A09,AH}.

For non-local Schr\"odinger operators the occurrence of zero or strictly positive eigenvalues just begins to be studied. In
\cite{LS17} we have constructed Neumann-Wigner type potentials for the operator $(-\Delta + m^2)^{1/2} - m$, for sufficiently
large $m > 0$, giving rise to embedded eigenvalues equal to $\sqrt{1+m^2}-m$. Also, we have shown that they converge to classical
cases of Neumann-Wigner type potentials in the non-relativistic limit. We have also obtained two families of fractional
Schr\"odinger operators in one-dimension for the case $m=0$ for which zero-eigenvalues occur. In \cite{JL18} we have gone beyond
this and constructed potentials on $\R^d$, $d\geq 1$, for $(-\Delta)^{\alpha/2}$ with arbitrary order $0 < \alpha < 2$; for further
details see Section 5 below.

In a further study \cite{AL} we have undertaken a systematic description of potentials generating eigenvalues at zero for
non-local Schr\"odinger operators with kinetic term $H_0=\Phi(-\Delta)$, given by Bernstein functions $\Phi$. Our results
reveal a specific interplay between the sign at infinity and the decay at infinity of such potentials, and a qualitative
difference in the mechanisms between non-local operators with heavy-tailed and exponentially light-tailed jump measures,
respectively (e.g., massless vs massive relativistic Laplacians).
Furthermore, by using a  Feynman-Kac formula-based analysis, in \cite{KL18} we derived sharp estimates on the spatial decay
of eigenfunctions at zero-eigenvalue or zero-resonance for fractional and comparable Schr\"odinger operators, and identified
possible scenarios of decay, which are significantly different from cases of bound states at negative eigenvalues \cite{KL17}.
For some other developments in the direction of existence of strictly positive embedded eigenvalues we refer to
\cite{M06,RUU16,C17}, to \cite{U06,RU12} for scattering theory of relativistic operators with $\alpha=1$, and  \cite{IW}
for a scattering theory for a more general class.

In the present paper our focus is on the non-existence aspect of embedded eigenvalues by using purely operator theory methods.
Apart from implications of absence of at-edge eigenvalues under specific conditions shown in \cite[Sect. 5]{AL}, which is an
independent approach from other techniques, some results in this direction have been obtained for fractional Schr\"odinger
operators with $\alpha = 1$ and dimension $d=3$ in \cite{RU12}, and more in \cite{FF14,S14,S15,R15,R17} by using unique
continuation arguments. It is natural to ask how do the criteria of absence of embedded eigenvalues observed for classical
Schr\"odinger operators modify for non-local cases, and as a first step in this direction, our goal is to explore how far
stock methods such as virial theorems, Mourre estimates, and an extension of the Birman-Schwinger principle to include the
edge of the continuous spectrum can be developed to lead to results on non-existence of embedded eigenvalues. Fitting with
these techniques, in the remainder of the paper we will consider three different frameworks of non-local operators
(some going beyond Bernstein functions of the Laplacian, and derive conditions ruling out either positive or
zero eigenvalues.

Our first set of results is based on a counterpart of the virial theorem (Section 2). We consider a class of operators of
the form $H= \ome(-i\nabla) + V$, with conditions on $\omega$ and $V$ fixed in Assumption \ref{viri_edited_by_atsuhide}.
This class includes massless and massive relativistic Schr\"odinger operators, and other cases in which the kinetic term is
given by a class of Bernstein functions, but it does not cover cases when $\omega$ is slowly varying (see Example \ref{EX1}
and Remark \ref{counter}). Our main result in this section is a virial-type theorem showing that no eigenvalues beyond zero
exist (Theorem \ref{thm2}). In case when $\ome(-i\nabla)=(-\Delta)^{\alpha/2}$, this translates to non-existence of positive
eigenvalues for a negative potential like $V(x) = - C (1+|x|^2)^{-\beta/2}$, $C>0$, whenever $0<\beta \leq \alpha$ (Example
\ref{EX3_edited_by_atsuhide}).

A second way we pursue is to obtain a Mourre estimate for non-local Schr\"odinger operators of the form $H =
\Psi(-\frac{1}{2}\Delta) + V$ on $L^2(\R^d)$ (Section 3). First we investigate the pure point spectrum, under the
conditions of Assumption \ref{ass1_atsuhide} on $\Psi$ and $V$. In Lemma \ref{lem1_atsuhide} we prove that $H$ has the $C^1(A)$
property with respect to the conjugate operator $A=-\frac{i}{2}(x\cdot \nabla + \nabla \cdot x)$, i.e., a more convenient
operator than used in \cite{Mo} in the initial attempts to apply the theory for pseudo-differential symbols.
Then we
prove in Theorem \ref{thm1_atsuhide} that $H$ has a pure point spectrum containing eigenvalues of finite multiplicity, with
only possible accumulation point at zero. Our strategy is based on \cite{AmBoGe}. Under further conditions
(see Assumption \ref{ass2_edited_by_atsuhide}) we then prove that this continues essentially to hold (Theorem \ref{thm2_atsuhide}).
We discuss fundamental models, such as the fractional Laplacian $\Psi(-\frac{1}{2}\Delta)=(-\Delta)^{\alpha/2}$
and the massive relativistic operator $\Psi(-\frac{1}{2}\Delta)=(-\Delta+m^{2/\alpha})^{\alpha/2}-m$ as specific cases, showing
finite multiplicity of eigenvalues above zero in Examples \ref{ex1_atsuhide}-\ref{ex2_atsuhide} and \ref{ex3_atsuhide}. In Theorems
\ref{MG1} and \ref{MG2} we arrive at showing that the operators $H$ in the given class have no embedded eigenvalue above zero.
Applying these theorems, we give some concrete potentials (see also Examples \ref{ex4_atsuhide} and \ref{EX6}-\ref{ex_itaru}).

Thirdly, we apply an extended version of Birman-Schwinger principle \cite{Sim} to non-local Schr\"odinger operators
with Bernstein functions of the Laplacian (Section 4), which differs from the classic version by including zero (the edge of
the essential spectrum).
Under a smallness condition on the potential, we show in Theorem \ref{thm4_atsuhide} that
in the case of the fractional Laplacian for $d\geq3$ with bounded and compactly supported
potentials no eigenvalues occur at zero or above. Corollary \ref{cor1_atsuhide} extends this conclusion to
fully supported potentials under an extra smallness condition on its $L^1$-norm. In the remaining part of the section, we
focus on the massive relativistic operator for $d=3$ and $1<\alpha< 2$ to prove by using its resolvent that there exist no
eigenvalues in $[0,\infty)$.

In the concluding Section 5 we study the existence and absence of embedded eigenvalues for the model case $H = (-\Delta)^
{\alpha/2} - C_\alpha (1+|x|^2)^{-\alpha}$ on $L^2(\R^3)$, in function of the coupling parameter $C_\alpha > 0$. In Theorem
\ref{dicho} we show that (under a restriction on $\alpha$), when $C_\alpha$ is small enough, no eigenvalue at the spectral
edge occurs, while it does for specific larger choices of $C_\alpha$. Also, we prove a property on the location of the pure
point spectrum and the existence of a discrete spectrum in two different regimes of $C_\alpha$.

Finally we note that we will adopt the notation $\Spec(A)$ throughout for the spectrum of operator $A$, with
the subscripts ``ess" for the essential,
``p" for the pure point, ``ac" for the absolutely continuous, and ``d" for the discrete
components of the spectrum, and mean the full spectrum when no subscript appears. Also, we write $\Dom (A)$ to denote the
domain of operator $A$, two-slot pointed brackets denote usual scalar product in $L^2(\R^d)$, and for one-slot we also use
the standard shorthand $\langle\cdot\rangle = \sqrt{1+|\cdot|^2}$.

\section{Virial theorem for non-local Schr\"odinger operators}
Let $V$ and $\ome$ be real-valued Borel functions on $\RR^d$. In this section we consider non-local Schr\"odinger
operators of the form
\begin{align*}
    H = \ome(-i\nabla) + V,
\end{align*}
acting on $L^2(\RR^d)$. For a function $f: \R^d \to \R$, we define the scaling
\begin{equation}
\label{scal}
f_a(x)=f(ax), \quad x \in \R^d.
\end{equation}
We will use a framework given by the basic restrictions below.
\begin{assumption}
\label{viri_edited_by_atsuhide}
The following conditions hold:
\begin{itemize}
\item[\rm (A.1)]
For almost every $\xi \in \RR^d$, $\ome(\xi) \geq 0$ is non-negative, $\ome(a\xi)$ is non-decreasing in $a\in
(0,1)$, and $\ome(\xi)$ is once differentiable with the bound $\xi\cdot(\nabla_\xi\ome)(\xi)\leq c\ome(\xi)$
for some $c>0$.
\item[\rm (A.2)]
The function
\begin{align*}
   b(a) := \sup_{\xi\in\RR^d} \frac{\ome(a\xi)}{\ome(\xi)}, \qquad a\in\Big[\frac{1}{2},1\Big),
\end{align*}
is well-defined, once differentiable in a neighbourhood of $a=1$, and $\lim_{a\uparrow1}b'(a) > 0$ exists.
\item[\rm (A.3)]
$V$ is relatively bounded with respect to $\ome(-i\nabla)$ with relative bound less than 1.
\item[\rm (A.4)]
There exists a multiplication operator $W$ in $L^2(\RR^d)$ with $\Dom(W)\supset \Dom(\ome(-i\nabla))$,
such that for all $\psi \in \Dom(\ome(-i\nabla))$
\begin{align*}
     \frac{1}{1-a}(V-V_a)\psi \to W\psi \quad \text{as}\quad a \uparrow 1
\end{align*}
in weak convergence sense.
\end{itemize}
\end{assumption}

Note that by (A.3) and the Kato-Rellich theorem $H$ is self-adjoint on $\Dom(\ome(-i\nabla))$. The following is the
main theorem in this section and corresponds to \cite[Th. XIII 59, Th. XIII 60]{RS4} for the case $\ome(-i\nabla)=-\Delta$.

\begin{theorem}
{\label{thm2}}
Let Assumption \ref{viri_edited_by_atsuhide} hold, and define $q =1/\lim_{a\uparrow1}b'(a)$. Suppose that
\begin{align}
\inner{\psi}{(V+qW)\psi}<0, \quad \psi \in \Dom(\ome(-i\nabla))\setminus\{0\}.\label{thm2_ass}
\end{align}
Then $H$ has no eigenvalue in $(0,\infty)$. Moreover, if $V$ is relatively compact with respect to
$\omega(-i\nabla)$, then the essential spectrum of $H$ is $[0,\infty)$ and $H$ has no embedded eigenvalue above zero.
\end{theorem}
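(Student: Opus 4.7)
The plan is a virial-type argument via unitary dilations. Let $U_a$, $a>0$, denote the dilation group $(U_a\psi)(x)=a^{d/2}\psi(ax)$, and for any candidate eigenfunction $\psi$ set $\phi_a:=U_{1/a}\psi$. I will assume, toward a contradiction, that $H\psi=E\psi$ with $E>0$ and $\|\psi\|=1$, and combine the scaling identity for $\langle\phi_a,H\phi_a\rangle$ with assumptions (A.1)--(A.4) to force $\langle\psi,(V+qW)\psi\rangle\geq E>0$, conflicting with the hypothesis \eqref{thm2_ass}.

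A direct Fourier-side computation yields
\[
\hat g(a):=\langle\phi_a,H\phi_a\rangle=\int_{\R^d}\omega(\xi/a)|\hat\psi(\xi)|^2\,d\xi+\langle\psi,V_a\psi\rangle,
\]
so that $\hat g(1)=E$ and, because $\|\phi_a\|\equiv 1$ while $\psi$ is a critical point of the Rayleigh functional $\phi\mapsto\langle\phi,H\phi\rangle$ on the unit sphere, $\hat g'(1)=0$. Substituting $\eta=a\xi$ in the defining inequality $\omega(a\xi)\leq b(a)\omega(\xi)$ of (A.2) produces the pointwise bound $\omega(\xi/a)\geq \omega(\xi)/b(a)$ for $a\in[1/2,1)$, and hence
\[
\hat g(a)-E\geq\Big(\tfrac{1}{b(a)}-1\Big)\langle\psi,\omega(-i\nabla)\psi\rangle+\langle\psi,(V_a-V)\psi\rangle.
\]
Dividing by $1-a>0$ and letting $a\uparrow 1$: the left-hand side tends to $-\hat g'(1)=0$; by (A.2), $(1/b(a)-1)/(1-a)\to 1/q$; and by (A.4), $\langle\psi,(V_a-V)\psi\rangle/(1-a)\to-\langle\psi,W\psi\rangle$. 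The resulting inequality $0\geq (1/q)\langle\psi,\omega(-i\nabla)\psi\rangle-\langle\psi,W\psi\rangle$, combined with $\langle\psi,\omega(-i\nabla)\psi\rangle=E-\langle\psi,V\psi\rangle$, rearranges into $\langle\psi,(V+qW)\psi\rangle\geq E$, the required contradiction. For the last assertion, relative compactness of $V$ with respect to $\omega(-i\nabla)$ combined with Weyl's theorem gives $\Spec_{\rm ess}(H)=\Spec_{\rm ess}(\omega(-i\nabla))=[0,\infty)$ (using the implicit continuity of $\omega$ with $\omega(0)=0$ present in the cases of interest), so the absence of positive eigenvalues rules out embedded eigenvalues above zero.

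The main obstacle I foresee is making $\hat g'(1)=0$ rigorous: it requires $L^2$-differentiability of $a\mapsto\phi_a$ at $a=1$ (i.e., $\psi\in\Dom(A)$ with $A=-\tfrac{i}{2}(x\cdot\nabla+\nabla\cdot x)$) together with the inclusion $\phi_a\in\Dom(H)$ for $a$ in a neighbourhood of $1$. The latter is secured by a Gronwall-type consequence of (A.1), namely $\omega(t\xi)\leq t^c\omega(\xi)$ for $t\geq 1$, which guarantees scaling stability of $\Dom(\omega(-i\nabla))$ near $a=1$. The former may be handled either by an a priori regularity result on eigenfunctions of $H$ under (A.1)--(A.4), or by approximating $\psi$ in the appropriate graph norm by elements of $\Dom(A)\cap\Dom(H)$ and transferring the one-sided inequality to the limit; in that case, the strict-inequality hypothesis \eqref{thm2_ass} will need to be propagated carefully through the approximation.
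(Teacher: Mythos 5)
Your overall plan coincides with the paper's: rescale the eigenfunction, use the inequality $\omega(a\xi)\leq b(a)\omega(\xi)$ from (A.2), take a limit of difference quotients as $a\uparrow 1$, and compare with \eqref{thm2_ass}. However, there is a genuine gap at the step $\hat g'(1)=0$. You present this as a consequence of $\psi$ being a critical point of the Rayleigh functional, but this ``criticality'' argument is exactly the virial identity $\langle\psi,p\cdot(\nabla\omega)(p)\psi\rangle=\langle\psi,W\psi\rangle$, which is the nontrivial content that needs to be \emph{proved}, not assumed. Rigorously, $\hat g'(1)=0$ requires $a\mapsto\phi_a$ to be differentiable in the graph norm of $H$ near $a=1$, which in turn needs $\psi\in\Dom(A)$ together with control of $(H-E)\phi_a$; none of this is guaranteed by (A.1)--(A.4) alone, since eigenfunctions of $H$ have no a priori reason to lie in $\Dom(A)$. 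You correctly flag this as the main obstacle, but neither of your two suggested fixes is carried out (and the second --- approximating $\psi$ in graph norm by elements of $\Dom(A)\cap\Dom(H)$ --- is delicate: the approximants are no longer eigenfunctions, so $\hat g_n'(1)\neq 0$ and the argument does not transfer simply).

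The paper's proof is designed precisely to sidestep this: instead of first claiming the virial identity $\hat g'(1)=0$ and then invoking a separate inequality, it writes the eigenvalue equation as an \emph{exact} algebraic identity at each $a$ (their equation (2.8), using the decomposition $\omega(p)\psi_a = b(a)(\omega(p)\psi)_a + ((\omega_a(p)-b(a)\omega(p))\psi)_a$), then passes to the limit $a\uparrow 1$ term by term with dominated-convergence estimates derived directly from (A.1)--(A.2). The nonpositivity of the kinetic residue comes from the operator inequality $\omega(ap)\leq b(a)\omega(p)$ without ever needing $\psi\in\Dom(A)$. In effect, the paper's careful limit computation \emph{establishes} the virial relation rather than presupposing it. Your one-sided bound $\omega(\xi/a)\geq\omega(\xi)/b(a)$ throws away the exact structure that the paper retains, leaving you dependent on the unproven assertion $\hat g'(1)=0$ to close the argument.
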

\begin{proof}
Let $\lambda$ be an arbitrary eigenvalue of $H$ with corresponding eigenfunction $\psi$, $\|\psi\|_2=1$, and denote
$p=-i\nabla$. By (A.1) we have that $\Dom(\ome(-i\nabla))\subset \Dom(\ome(-ai\nabla))$. By a basic property of
difference operators we get that $\psi_a \in \Dom(\ome(p))$ and
\begin{align*}
  \ome(p)\psi_a = b(\ome(p)\psi)_a + (\ome_a(p) - b\ome(p))\psi)_a.
\end{align*}
holds for every $a \in (0,1)$ and $b\in\RR$, using the scaling \eqref{scal}. Since $\ome(p)\psi = -V(x)\psi +
\lambda \psi$, we have
\begin{align*}
  \ome(p)\psi_a = b ((\lambda-V)\psi)_a + ((\ome_a(p) - b\ome(p))\psi)_a,
\end{align*}
and thus
\begin{align}
   \inner{\psi}{\ome(p)\psi_a}
   =
   b \inner{\psi}{(\lambda -V_a)\psi_a} + \inner{\psi}{((\ome_a(p) - b\ome(p))\psi)_a}
\label{bz}
\end{align}
Since $\ome(p)$ is self-adjoint, \eqref{bz} becomes
\begin{align*}
   \inner{\psi}{(\lambda-V)\psi_a}
   \, = \,
   b \inner{\psi}{(\lambda -V_a)\psi_a} + \inner{\psi}{((\ome_a(p)-b\ome(p))\psi)_a}.
\end{align*}
Hence for every $b\neq 1$ it follows that
\begin{align}
 \lambda \inner{\psi}{\psi_a}
  = \, & \frac{1}{1-b}\inner{\psi}{(V-V_a)\psi_a} + \inner{\psi}{V_a\psi_a}  \notag \\
    \, & + \frac{1}{1-b} \inner{\psi}{((\ome_a(p)-b\ome(p))\psi)_a}.  \label{2.8}
\end{align}
By a denseness argument, we get that $\psi_a \to \psi$ as $a\uparrow 1$ in strong convergence sense. Using (A.4) we get
\begin{align*}
\inner{(V-V_a)\psi}{\varphi}=(1-a)\inner{\left(\frac{V-V_a}{1-a}-W\right)\psi}{\varphi}+(1-a)
\inner{W\psi}{\varphi} \to 0
\end{align*}
as $a\uparrow 1$ for $\varphi\in L^2({\RR^d})$. We therefore have $\sup_{a\in[1/2,1)}\|V_a\psi\|<\infty$
and
\begin{align}
\inner{\psi}{V_a\psi_a}-\inner{\psi}{V\psi}=\inner{(V_a-V)\psi}{\psi}+\inner{V_a\psi}{(\psi_a-\psi)}\to 0
\label{2.11}
\end{align}
as $a\uparrow 1$. Note that (A.1)-(A.2) imply that $b(a)<1$ in a neighbourhood of $a=1$ and
\begin{align*}
1>b(a)\geq\frac{\ome(a\xi)}{\ome({\xi})} \to 1
\end{align*}
holds as $a\uparrow 1$, for almost every $\xi\in\RR^d$. We thus extend $b(a)$ to $a=1$ by $b(1)=
\lim_{a\uparrow1}b(a)=1$. In a neighbourhood of $a=1$ we have
\begin{align}
\frac{1-b(a)}{1-a}=\frac{b(a+1-a)-b(a)}{1-a}=b'(a)+O(1-a)
\quad \mbox{and} \quad
\lim_{a\uparrow1}\frac{1-b(a)}{1-a}=\lim_{a\uparrow1}b'(a)=\frac{1}{q}
\label{lim_b'}
\end{align}
Write now $b = b(a)$ in \eqref{2.8}. By (A.4), \eqref{lim_b'} and strong convergence of $\psi_a\to\psi$ we
have
\begin{align}
   & \lim_{a \uparrow 1} \frac{1}{1-b(a)} \inner{\psi}{(V-V_a)\psi_a}  \notag \\
   & \quad = \lim_{a \uparrow 1} \frac{1-a}{1-b(a)} \frac{1}{1-a} \inner{\psi}{(V-V_a)\psi_a}
   = \inner{\psi}{qW(x)\psi},
\label{2.15}
\end{align}
where we used that
$\sup_{a\in[1/2,1)}\big\|\frac{V-V_a}{1-a}\psi\big\|<\infty$,
resulting from (A.4). By (A.1)-(A.2),
\begin{eqnarray*}
\frac{\ome(\xi)-\ome(a\xi)}{1-a}
&=&
\frac{1}{1-a}\int_a^1\xi\cdot(\nabla_\xi\ome)(\theta\xi)d\theta \notag \\
&\leq&
\frac{c}{1-a}\int_a^1\frac{\ome(\theta\xi)}{\theta}d\theta\leq\frac{c}{(1-a)a}\int_a^1b(\theta)
d\theta\ome(\xi)\leq2c\ome(\xi)
\end{eqnarray*}
and
\begin{align*}
\frac{\ome(\xi)-\ome(a\xi)}{1-a}\to\xi\cdot(\nabla_\xi\ome)(\xi)
\quad\mbox{as}\quad a\uparrow1
\end{align*}
hold for almost every $\xi\in\RR^d$. Since $\sup_{a\in [1/2,1)}|(1-a)/(1-b(a))|<\infty$ by \eqref{lim_b'},
and
\begin{align*}
\frac{\ome(a\xi)-b(a)\ome(\xi)}{1-b(a)}=\ome(\xi)-\frac{1-a}{1-b(a)}\frac{\ome(\xi)-\ome(a\xi)}{1-a},
\end{align*}
we obtain
\begin{align}
\left|\frac{\ome(a\xi)-b(a)\ome(\xi)}{1-b(a)}\right|
\leq
\ome(\xi)+2c\sup_{a\in [1/2,1)}\left|\frac{1-a}{1-b(a)}\right|\ome(\xi).
\label{2.16}
\end{align}
By (A.1) and
\eqref{2.16}, we have
\begin{align*}
\left|\left(\frac{\ome(a\xi)-b(a)\ome(\xi)}{1-b(a)}-\ome(\xi)+q\xi\cdot(\nabla_\xi\ome)(\xi)\right)
\hat{\psi}(\xi)\right|\leq C\ome(\xi)|\hat{\psi}(\xi)|,
\end{align*}
where $\hat{\psi}$ denotes the Fourier transform of $\psi$. Since $\ome\hat{\psi}\in L^2(\RR^d)$, dominated convergence then gives
\begin{align}
\frac{\ome(ap)-b(a)\ome(p)}{1-b(a)}\psi\to(\ome(p)-qp\cdot(\nabla_\xi\ome)(p))\psi\label{2.17}
\end{align}
in strong convergence sense as $a\uparrow1$. Noting that $\inner{f_a}{g_a}=a^{-d}\inner{f}{g}$, we thus obtain
\begin{align}
\frac{1}{1-b(a)} \inner{\psi_a}{((\ome_a(p)-b(a)\ome(p))\psi)_a}
\to\inner{\psi}{\left(\ome(p)-qp\cdot(\nabla_\xi\ome)(p)\right)\psi}\label{2.18}
\end{align}
as $a\uparrow1$. We write
\begin{align}
\lefteqn{\frac{1}{1-b(a)} \inner{\psi}{((\ome_a(p)-b(a)\ome(p))\psi)_a}}\nonumber \\
&=\frac{1}{1-b(a)} \inner{\psi_a}{((\ome_a(p)-b(a)\ome(p))\psi)_a}+\frac{1}{1-b(a)} \inner{\psi-\psi_a}{((\ome_a(p)-b(a)\ome(p))\psi)_a}.\label{2.19}
\end{align}
The second term at the right hand side of \eqref{2.19} is
\begin{align}
\frac{1}{1-b(a)} \left|\inner{\psi-\psi_a}{((\ome_a(p)-b(a)\ome(p))\psi)_a}\right|\leq\|\psi-\psi_a\|a^{-d/2}\left\|\frac{\ome(ap)-b(a)\ome(p)}{1-b(a)}\psi\right\|\to0\label{2.20}
\end{align}
as $a\uparrow1$, where we used $\|f_a\|=a^{-d/2}\|f\|$ and the strong convergence \eqref{2.17}. By \eqref{2.18}-\eqref{2.20}, we have
\begin{align}
-\infty<\lim_{a\uparrow 1} \frac{1}{1-b(a)} \inner{\psi}{((\ome_a(p)-b(a)\ome(p))\psi)_a}
=\inner{\psi}{\left(\ome(p)-qp\cdot(\nabla_\xi\ome)(p)\right)\psi} \leq 0,
\label{2.21}
\end{align}
noting that using (A.2) the operator inequality $\ome(ap)\leq b(a)\ome(p)$ follows. By \eqref{thm2_ass},
\eqref{2.8}-\eqref{2.11}, \eqref{2.15} and \eqref{2.21}, we have then $\lambda\inner{\psi}{\psi} < 0$,
hence $H$ has no non-negative eigenvalues.
\end{proof}

Assumptions (A.1) and (A.2) are conditions on the kinetic term $\ome(p)$ in $H$. Theorem \ref{thm2} can be applied
to various specific choices of non-local Schr\"odinger operators.
\begin{example}{\label{EX1}}
{\rm
Some examples of $\ome(p)$ of immediate interest and the values of the corresponding coupling parameter
$q=1/\lim_{a\uparrow 1}b'(a)$ are as follows:
\begin{enumerate}
\item[(1)]
$\ome(p) = -\frac{1}{2}\Delta$, with $q=\frac{1}{2}$ (classical Schr\"odinger operator)
\item[(2)]
$\ome(p) = (-\Delta)^{\alpha/2}$, $0 < \alpha < 2$, with $q=\frac{1}{\alpha}$ (fractional Schr\"odinger operator)
\item[(3)]
$\ome(p) = (-\Delta + m^{2/\alpha})^{\alpha/2} - m$, $0 < \alpha < 2$, $m > 0$, with $q=\frac{1}{\alpha}$  (relativistic
Schr\"odinger operator)
\item[(4)]
$\ome(p) = -\Delta +c(-\Delta)^{\alpha/2}$, $c > 0$, with $q=\frac{1}{2}$ (jump-diffusion operator).
\end{enumerate}
}
\end{example}

\begin{remark}
\label{counter}
{\rm
For their special interest, we recall that the Bernstein functions mentioned in the Introduction have the canonical
description
\begin{equation}
\Phi(u) = k + bu + \int_{(0,\infty)} (1 - e^{-yu}) \mu(dy), \quad u \geq 0,
\label{bernrep}
\end{equation}
in terms of three parameters, two numbers $k, b\geq 0$, and a Borel measure $\mu$ on $\R \setminus \{0\}$ such that
$\mu((-\infty,0))=0$ and $\int_{\R\setminus\{0\}} (y \wedge 1) \mu(dy) < \infty$. The non-local operator $\Phi(-\Delta)$
can be constructed by using functional calculus, and the case $k=0=b$ corresponds to a purely jump component. The choices
of $\omega$ in Example \ref{EX1} are Bernstein functions as given in \eqref{bernrep}, however, we also note that not every
Bernstein function satisfies the conditions set in Assumption \ref{viri_edited_by_atsuhide}. While it is known that $\Phi(u)
= \log \left(1 + u^{\alpha/2}\right)$ is a Bernstein function, choosing $\ome(p) = \log \left(1 + (-\Delta)^{\alpha/2}\right)$,
$0<\alpha<2$, we see that $b(a) = 1$. Hence $b'(a)= 0$ and condition (A.2) fails to hold.
}
\end{remark}

The above result can be applied to specific cases of interest.
\begin{example}{\label{EX2}}
{\rm
Let $d=3$ and $\ome(p)=\sqrt{-\Delta}$. Consider the homogeneous Coulomb-type potential $V(x) = -C|x|^{-\gamma}$,
with $C>0$ and $\gamma \in (0,1)$. Then $V$ is $\sqrt{-\Delta}$-compact with relative bound less than 1 by
\cite[Prop. p. 170]{RS2} and \cite[Ex. 7 p. 118]{RS4}. Moreover,
\begin{align*}
  \frac{1}{a-1}(V_a(x)-V(x))
     & = \frac{1}{a-1} \frac{-C}{|x|^\gamma} \left(\frac{1}{a^\gamma}-1\right) \\
     & = V(x) a^{-\gamma} \frac{1-a^\gamma}{a-1} \; \stackrel {a \uparrow 1} \longrightarrow \;  -\gamma V(x) =: W(x).
\end{align*}
Then $V+qW= V-\gamma V = (1-\gamma)V <0$. Thus by Theorem \ref{thm2} the operator $H = \ome(p)+V$ has no positive eigenvalue.
}
\end{example}

\begin{example}\label{EX3_edited_by_atsuhide}
{\rm
Let $\ome(p)=(-\Delta)^{\alpha/2}$ with $\alpha>0$, and consider 
\begin{equation*}
V(x)=-\frac{C}{(1+|x|^2)^{\beta/2}}
\end{equation*}
with $C>0$ and $0<\beta\leq\alpha$.
We have $W(x)=x\cdot \nabla V(x)=C\beta|x|^2\langle x\rangle^{-\beta-2}$. Then
$$
V+qW=-C\langle x\rangle^{-\beta-2} (1+(1-\beta/\alpha)|x|^2)<0
$$
and thus $H = \ome(p)+V$ has no positive eigenvalue. We will obtain an improved result
for this case in Example \ref{ex6_atsuhide}.
}
\end{example}

\section{Mourre estimate for non-local Schr\"odinger operators}
In this section we consider the operator
\begin{align}
   H = H_0 + V := \Psi(p^2/2) + V
\end{align}
acting on $L^2(\RR^d)$, where $\Psi : [0,\infty) \to \RR$ is a real measurable function subject to the
conditions below, and $p=-i\nabla$. Our goal here is to develop the general ideas in \cite{Mo}
to the framework given by a class of non-local Schr\"odinger operators defined by the conditions below. As
it will be seen, we will be able to use a conjugate operator which is simpler than those originally used in
the context of pseudo-differential operators, and we derive our results by the virial theorem in the context
of Mourre theory completed with a virial condition, arriving at some verifiable criteria which can be checked
concretely on specific cases.

First we investigate the pure point spectrum of $H$. In this case $V$ may have singularities and satisfy
rather general conditions. We use the notation $\langle\cdot\rangle = \sqrt{1+|\cdot|^2}$.
\begin{assumption}
\label{ass1_atsuhide}
The following conditions hold:
\begin{enumerate}
\item[\rm(H.1)]
$\Psi\in C^1(0,\infty)$, $\Psi\geq0$, $\Psi'\geq0$, and $\Psi(+0)=\Psi(0)=0$.
\item[\rm(H.2)]
There exist constants $c_1,d_1\geq 0$ such that $u^{1/2} \leq c_1\Psi(u)+d_1$, for all
$u\geq 0$.
\item[\rm(H.3)]
There exist constants $c_2,d_2\geq 0$ such that $u\Psi'(u) \leq c_2\Psi(u)+d_2$, for all $u\geq 0$.
\item[\rm(H.4)]
There exists a constant $\mu>0$ such that $2 u\Psi'(u) \geq \mu \Psi(u)$, for all $u \geq 0$.
\item[\rm(H.5)]
$\langle x\rangle V$ is $H_0$-compact.
\end{enumerate}
\end{assumption}
We note that by (H.5) the operator $H$ is self-adjoint on $\Dom(H)=\Dom(H_0)$.

\begin{remark}\label{rem1_atsuhide}
{\rm
We note that (H.1)-(H.2) imply $\Spec(H_0)=[0,\infty)$. By (H.4) we find that the set $\left\{u>0 :
\Psi'(u)=0\right\}$ and $\Spec_{\rm p}(H_0)$ are empty, and that $\Spec_{\rm ac}(H_0)=[0,\infty)$ (see
\cite[Prop. 1.1]{IW}). We also find by (H.5) that $\Spec_{\rm ess}(H_0)=\Spec_{\rm ess}(H)=[0,\infty)$.}
\end{remark}

Let $U(\theta) : L^2(\RR^d)
\to L^2(\RR^d)$ be the unitary transform defined by
\begin{align*}
U(\theta)\psi(x) := e^{d\theta/2}\psi(e^\theta x), \quad \psi \in L^2(\RR^d), \; \theta\in\RR.\label{dilation}
\end{align*}
Its generator is
$A = \frac{1}{2}(x\cdot p + p \cdot x)$,
i.e., $A$ is self-adjoint and $U(\theta) = e^{i\theta A}$.

Recall that a bounded operator $B$ is called of class $C^1(A)$ if for all $\psi\in L^2(\RR^d)$ the map
$t \mapsto e^{itA}Be^{-itA}\psi$ is of class $C^1$, and a self-adjoint operator $B$ is called of class
$C^1(A)$ if for all $\psi\in L^2(\RR^d)$ and some $z\in\rho(B)$ the map $t \mapsto e^{itA}(B-z)^{-1}
e^{-itA}\psi$ is of class $C^1$, where $\rho(B)$ is the resolvent set of $B$. For more details we refer to
\cite{AmBoGe}. According to \cite{AmBoGe}, we first prove $C^1(A)$ property of $H$.

\begin{lemma}
\label{lem1_atsuhide}
Let {\rm(H.1)-(H.3)} and {\rm (H.5)} of Assumption \ref{ass1_atsuhide} hold. Then $H$ belongs to the
class $C^1(A)$.
\end{lemma}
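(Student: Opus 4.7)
The plan is to verify the standard characterisation from \cite{AmBoGe}: a self-adjoint operator $H$ belongs to $C^1(A)$ if and only if the dilation group $U(\theta)=e^{i\theta A}$ leaves $\Dom(H)$ invariant and the sesquilinear commutator form $[H,iA]$, defined a priori on $\Dom(H)\cap\Dom(A)$, extends to a bounded form from $\Dom(H)$ (with graph norm) into its anti-dual. Since (H.5) makes $V$ an $H_0$-compact perturbation with relative bound zero, $H$ is self-adjoint and $\Dom(H)=\Dom(H_0)$, so it is enough to test the two conditions for $H_0$ and to bound $[V,iA]$ in form sense by the $H_0$-graph norm.

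For the domain invariance, I use the scaling identities $U(\theta)xU(\theta)^{-1}=e^\theta x$ and $U(\theta)pU(\theta)^{-1}=e^{-\theta}p$, which via functional calculus yield
\begin{equation*}
U(\theta)H_0U(\theta)^{-1}=\Psi(e^{-2\theta}p^2/2).
\end{equation*}
Integrating (H.3) gives a two-sided bound $\Psi(\lambda u)\le C_\lambda(\Psi(u)+1)$ for every $\lambda>0$, so the graph norms of $\Psi(e^{-2\theta}p^2/2)$ and $\Psi(p^2/2)$ are equivalent, giving $U(\theta)\Dom(H_0)=\Dom(H_0)$.

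For the kinetic commutator, strong differentiation of $\theta\mapsto U(\theta)H_0U(\theta)^{-1}\psi$ at $\theta=0$ on $\Dom(H_0)$ gives
\begin{equation*}
[H_0,iA]=-p^2\Psi'(p^2/2),
\end{equation*}
and (H.3) immediately yields the operator bound $\|p^2\Psi'(p^2/2)\psi\|\le 2c_2\|H_0\psi\|+2d_2\|\psi\|$, so this piece extends to a bounded operator $\Dom(H_0)\to L^2$, and a fortiori to a bounded form.

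For the potential part, which is the delicate step since $V$ is not assumed differentiable, I compute the form purely algebraically on $\phi,\psi\in C_c^\infty(\R^d)$. Writing $iA=x\cdot\nabla+d/2$ and using that $V$ is real, the $d/2$ terms cancel and one is left with
\begin{equation*}
\langle\phi,[V,iA]\psi\rangle=\langle V\phi,(x\cdot\nabla)\psi\rangle-\langle(x\cdot\nabla)\phi,V\psi\rangle.
\end{equation*}
Each term I estimate by inserting $\langle x\rangle\langle x\rangle^{-1}$ and using $|x_j|\le\langle x\rangle$:
\begin{equation*}
|\langle V\phi,x_j\partial_j\psi\rangle|\le\|\langle x\rangle V\phi\|\,\|\partial_j\psi\|.
\end{equation*}
By (H.5) $\|\langle x\rangle V\phi\|\le C(\|H_0\phi\|+\|\phi\|)$, and (H.2) together with the spectral theorem gives $\|p\psi\|\le\sqrt{2}(c_1\|H_0\psi\|+d_1\|\psi\|)$. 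Summing over $j$ and combining with the symmetric term yields a bound of the form $C\|\phi\|_{H_0}\|\psi\|_{H_0}$, establishing the continuous extension of $[V,iA]$ to $\Dom(H_0)\times\Dom(H_0)$. The hard part is precisely this last estimate: since $\nabla V$ need not exist even as a function, the role of (H.5) is to provide the extra factor $\langle x\rangle$ which, balanced against $x$ in the dilation generator, lets the derivative on $\psi$ be controlled via (H.2) by $\|H_0\psi\|$; this is really where the assumptions conspire to give the $C^1(A)$ property.
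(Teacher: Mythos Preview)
Your argument is essentially correct and runs parallel to the paper's proof: both establish the form bound
\[
|\langle\phi,[H,iA]\psi\rangle|\le C\|\phi\|_{H_0}\|\psi\|_{H_0}
\]
by handling $[H_0,iA]$ via (H.3) and $[V,iA]$ via the factorisation ``$\langle x\rangle V$ against $\langle x\rangle^{-1}A$'', controlled respectively by (H.5) and (H.2). The paper writes this last step abstractly as the boundedness of $\langle x\rangle^{-1}A\langle p\rangle^{-1}$ together with $\|\langle p\rangle\psi\|\le C\|\langle H_0\rangle\psi\|$, whereas you unfold it explicitly on $C_c^\infty$; the content is the same.

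The genuine difference is in the \emph{second} ingredient used to conclude $H\in C^1(A)$. You rely on the invariance $U(\theta)\Dom(H_0)=\Dom(H_0)$, obtained from the scaling identity and (H.3). The paper instead invokes \cite[Th.~6.2.10(a)]{AmBoGe}, which asks for a core condition: it shows that $(H-z)^{-1}$ maps the core $\Dom(\langle x\rangle)$ for $A$ into $\Dom(A)$, by proving that $\langle x\rangle(H-z)^{-1}\langle x\rangle^{-1}$ is bounded via (H.3), (H.5) and the resolvent identity. Your route avoids this resolvent computation; the paper's route plugs directly into the specific theorem it cites. Both are legitimate.

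Two small corrections. First, the sign of the kinetic commutator: since $e^{i\theta A}H_0e^{-i\theta A}=\Psi(e^{-2\theta}p^2/2)$ and $\frac{d}{d\theta}\big|_{0}e^{i\theta A}H_0e^{-i\theta A}=-i[H_0,A]$, one gets $i[H_0,A]=[H_0,iA]=+p^2\Psi'(p^2/2)$, not $-p^2\Psi'(p^2/2)$; this does not affect your bound. Second, the characterisation you quote is not an ``if and only if'': domain invariance under $e^{itA}$ is \emph{not} a necessary condition for $H\in C^1(A)$. You only use the sufficient direction, which is valid, but you should state it as a sufficient criterion and point to the precise result you are invoking.
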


\begin{proof}
Let $\mathscr{S}(\RR^d)$ be the Schwartz space, which is a dense subspace in both
$\Dom(H_0)$ and $\Dom(A)$. 
For a $\psi\in\mathscr{S}(\RR^d)$, Fourier transform gives
\begin{equation*}
\mathscr{F}\left[\langle H_0\rangle e^{i\theta A}\langle H_0\rangle^{-1}\psi\right](\xi)
=e^{-d\theta/2}\left\langle\Psi(\xi^2/2)\right\rangle\left\langle \Psi(e^{2\theta}\xi^2/2)\right\rangle^{-1}
\mathscr{F}[\psi](e^{-\theta}\xi).
\end{equation*}
Thus $e^{i\theta A}\Dom(H_0)\subset\Dom(H_0)$ and
\begin{equation}
\sup_{|\theta|\leq1}\left\|\langle H_0\rangle e^{i\theta A}\langle H_0\rangle^{-1}\right\|<\infty.
\label{eq1_1}
\end{equation}
hold. By a straightforward computation we obtain
\begin{equation*}
i[H_0,A]=\Psi'(p^2/2)p^2
\end{equation*}
on $\mathscr{S}(\RR^d)$. Since for all $\psi,\varphi\in\mathscr{S}(\RR^d)$, the form
\begin{equation}
\inner{\psi}{\Psi'(p^2/2)p^2\varphi}=\inner{\sqrt{\Psi'(p^2/2)}p\psi}{\sqrt{\Psi'(p^2/2)}p\varphi}\label{eq1_2}
\end{equation}
is closable and semi-bounded, there exists a self-adjoint operator associated with the closed extension of
\eqref{eq1_2}. We denote this self-adjoint operator by
$$
i[H_0,A]^0_{\mathscr{S}(\RR^d)}=\Psi'(p^2/2)p^2,
$$
and see that $\Dom(H_0)=\Dom(i[H_0,A]^0_{\mathscr{S}(\RR^d)})$ holds by (H.3) and (H.4). In virtue of \cite[Prop. II.1]{Mo}, the self-adjoint operator $i[H_0,A]^0$
associated with the form $i[H_0,A]$ on $\Dom(A)\cap\Dom(H_0)$ is well-defined and satisfies
\begin{equation*}
i[H_0,A]^0=i[H_0,A]^0_{\mathscr{S}(\RR^d)}.
\end{equation*}
Next we consider the commutator of $V$ with $A$. Since $\langle x\rangle^{-1}A\langle p\rangle^{-1}$ is bounded, there exists a constant
$C > 0$ such that
\begin{equation*}
\left\|\langle x\rangle^{-1}A\psi\right\|\leq\left\|\langle x\rangle^{-1}A\langle p\rangle^{-1}\right\|\left\|\langle p\rangle\psi\right\|\leq C\left\|\langle H_0\rangle\psi\right\|
\end{equation*}
for $\psi\in\Dom(H_0)\cap\Dom(A)$ by (H.2). Due to (H.5), for every $0<\varepsilon<1$ there exists a positive constant $C_\varepsilon$ such that
\begin{equation*}
\left\|\langle x\rangle V\varphi\right\|\leq\varepsilon\left\|H_0\varphi\right\|+C_\varepsilon\left\|\varphi\right\|.
\end{equation*}
Thus we have
\begin{equation*}
\left|\inner{V\psi}{A\varphi}\right|=\left|\inner{\langle x\rangle V\psi}{\langle x\rangle^{-1}A\varphi}\right|
\leq C\left\|\langle H_0\rangle\psi\right\|\left\|\langle H_0\rangle\varphi\right\|
\end{equation*}
and
\begin{equation*}
\left|\inner{V\psi}{A\varphi}-\inner{A\psi}{V\varphi}\right|
\leq C\left\|\langle H_0\rangle\psi\right\|\left\|\langle H_0\rangle\varphi\right\|
\end{equation*}
for $\psi, \varphi\in\Dom(A)\cap\Dom(H_0)$. Using the Riesz representation theorem, there exists a
bounded operator $\mathscr{L}: \mathscr{H}_2=\Dom(H_0)\to\mathscr{H}_{-2}=\mathscr{H}_2^*$ satisfying
\begin{equation*}
\inner{V\psi}{A\varphi}-\inner{A\psi}{V\varphi}=\inner{\psi}{\mathscr{L}\varphi}_{\mathscr{H}_{-2}},
\quad \psi, \varphi\in\Dom(A)\cap\Dom(H_0).
\end{equation*}
Note that $\mathscr{H}_{-2}$ is the completion of
$\big\{\psi\in L^2(\RR^d): \int_0^\infty\langle \lambda\rangle^{-2}
d\inner{E_{H_0}(\lambda)\psi}{\psi}<\infty\big\}$,
where $E_{H_0}$ is the spectral measure of $H_0$. Denote $\mathscr{L}=[V,A]_{-2}$. Since
\begin{equation*}
\left|\inner{\psi}{i[H_0,A]^0\varphi}\right|
\leq C\left\|\psi\right\|\left\|\langle H_0\rangle\varphi\right\|
\end{equation*}
by (H.3) and \eqref{eq1_2}, $[H_0,A]^0\psi=[H_0,A]_{-2}\psi$ holds for all $\psi\in\Dom(H_0)$. We thus obtain
\begin{equation*}
[H,A]_{-2}=[H_0,A]^0+[V,A]_{-2}
\end{equation*}
on $\Dom(H_0)$. Also, the estimate
\begin{equation}
\left|\inner{A\psi}{H\psi}-\inner{H\psi}{A\psi}\right|=\left|\inner{\psi}{[H,A]_{-2}\psi}\right|
\leq C\left\|\langle H_0\rangle\psi\right\|^2\leq C\big(\left\|H\psi\right\|^2+\left\|\psi\right\|^2\big)
\label{eq1_3}
\end{equation}
holds for $\psi\in\Dom(A)\cap\Dom(H_0)$, where we used (H.5). Note that $\langle x\rangle(H_0-z)^{-1}\langle
x\rangle^{-1}$ is bounded by (H.3). Using the resolvent formula we have
\begin{equation*}
\left(H-z\right)^{-1}=-\left(H_0-z\right)^{-1}\langle x\rangle^{-1}\langle x\rangle V\left(H-z\right)^{-1}
+\left(H_0-z\right)^{-1}
\end{equation*}
and $\langle x\rangle(H-z)^{-1}\langle x\rangle^{-1}$ is bounded for all $z\in\rho(H)$ by (H.5). This implies that
\begin{equation}
\left(H-z\right)^{-1}\Dom(\langle x\rangle)\subset\Dom(H_0)\cap\Dom(\langle x\rangle)\subset\Dom(A),
\label{eq1_4}
\end{equation}
since $A$ is closed and $2A\psi=x\cdot(p\psi)+p\cdot(x\psi)$ holds for $\psi\in\Dom(H_0)\cap\Dom(\langle x\rangle)$ by (H.2). It is seen that \eqref{eq1_4} is equivalent to
\begin{equation}
\Dom(\langle x\rangle)\subset\left\{\psi\in\dom(A): \left(H-z\right)^{-1}\psi\in\dom(A)\right\}.
\label{eq1_5}
\end{equation}
Relations \eqref{eq1_3} and \eqref{eq1_5} imply that $H\in C^1(A)$
by an application of \cite[Th. 6.2.10(a)]{AmBoGe}.
\end{proof}

\begin{theorem}\label{thm1_atsuhide}
Let {\rm(H.1)-(H.5)} of Assumption \ref{ass1_atsuhide} hold. $\Spec_{\rm p}(H)\setminus\{0\}$ is discrete
and the multiplicity of each eigenvalue is finite. The only accumulation point of $\Spec_{\rm p}(H)$ can be at zero.
\end{theorem}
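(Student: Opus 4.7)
The plan is to apply standard Mourre theory: combine the $C^1(A)$ property established in Lemma \ref{lem1_atsuhide} with a strict Mourre estimate at every energy $E > 0$ and the virial theorem to rule out any accumulation of eigenvalues in $(0,\infty)$ and force finite multiplicities there. Negative eigenvalues are already discrete and of finite multiplicity since $\Spec_{\rm ess}(H) = [0,\infty)$ by Remark \ref{rem1_atsuhide}, so only the positive part of $\Spec_{\rm p}(H)$ really needs attention.

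To derive the Mourre estimate I would first exploit (H.4). Lemma \ref{lem1_atsuhide} identifies the commutator $i[H_0,A]^0$ with $\Psi'(p^2/2)p^2$ on $\Dom(H_0)$, and (H.4) then yields the global operator inequality $i[H_0,A]^0 \geq \mu H_0$. Fixing $E > 0$ and choosing $\delta > 0$ small enough that $J = (E-\delta, E+\delta) \subset (0,\infty)$, I would pick a smooth cutoff $g$ supported in $J$ and obtain, by the functional calculus, $g(H_0)\, i[H_0,A]^0\, g(H_0) \geq \mu(E-\delta)\, g(H_0)^2$. Next I would pass from $H_0$ to $H$: since $V$ (and even $\langle x\rangle V$) is $H_0$-compact by (H.5), the Helffer--Sj\"ostrand formula shows that $g(H)-g(H_0)$ is compact, hence so are $g(H)^2 - g(H_0)^2$ and $g(H)\, i[H_0,A]^0\, g(H) - g(H_0)\, i[H_0,A]^0\, g(H_0)$.

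The more delicate ingredient is to show that the potential commutator $[V,A]_{-2}$ defined in Lemma \ref{lem1_atsuhide} becomes compact when sandwiched by $g(H)$. I would rewrite its sesquilinear form as
$$
\langle \psi, i[V,A]_{-2}\varphi\rangle \;=\; i\langle \langle x\rangle V\psi, \langle x\rangle^{-1}A\varphi\rangle - i\langle \langle x\rangle^{-1}A\psi, \langle x\rangle V\varphi\rangle,
$$
so that, using boundedness of $\langle x\rangle^{-1} A \langle p\rangle^{-1}$ (established in the proof of the lemma) together with the $H_0$-compactness of $\langle x\rangle V$, one factorises $g(H)\, i[V,A]_{-2}\, g(H)$ as a product of bounded operators with a compact one. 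Combining the two parts yields the strict Mourre estimate
$$
E_H(J')\, i[H,A]_{-2}\, E_H(J') \;\geq\; c\, E_H(J') + K
$$
on a slightly smaller sub-interval $J' \ni E$, with $c = \mu(E-\delta)/2 > 0$ and a compact operator $K$.

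The conclusion then follows by the classical virial-plus-weak-compactness argument, as in \cite[Cor.~7.2.11]{AmBoGe}. Were some $E \in (0,\infty)$ an accumulation point of $\Spec_{\rm p}(H)$ or an eigenvalue of infinite multiplicity, one could extract an orthonormal sequence $(\psi_n)$ of eigenfunctions with eigenvalues in $J'$, and then $\psi_n \rightharpoonup 0$. The virial theorem, valid since $H \in C^1(A)$, yields $\langle \psi_n, i[H,A]_{-2}\psi_n\rangle = 0$, while the Mourre estimate forces $0 \geq c + \langle \psi_n, K\psi_n\rangle \to c$, contradicting $c > 0$. The main obstacle I anticipate is precisely the compactness of $g(H)\,[V,A]_{-2}\, g(H)$: only the weighted potential $\langle x\rangle V$ is controlled by (H.5), so the factorisation through the bounded multiplier $\langle x\rangle^{-1} A \langle p\rangle^{-1}$ must be arranged so that the $\langle x\rangle$ weight absorbs the linear growth of $A$ at infinity exactly where it is needed.
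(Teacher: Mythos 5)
Your overall strategy is sound and the conclusion is correct, but the route you take to the Mourre inequality is slightly different from the paper's and introduces an avoidable technical loose end. The paper does not establish a Mourre estimate for $H_0$ and then transfer it to $H$. Instead it inserts the inequality (H.4), $\Psi'(p^2/2)p^2\geq\mu\Psi(p^2/2)=\mu H_0=\mu(H-V)$, directly under the sandwich by $f(H)$:
\begin{align*}
f(H)\,i[H,A]_{-2}\,f(H)
&= f(H)\big(\Psi'(p^2/2)p^2+i[V,A]_{-2}\big)f(H)\\
&\geq \mu f(H)Hf(H) + f(H)\big(-\mu V+i[V,A]_{-2}\big)f(H)\\
&\geq \mu(\lambda-\delta)f(H)^2 + K,
\end{align*}
with $K=-\mu f(H)Vf(H)+f(H)i[V,A]_{-2}f(H)$; the spectral support of $f$ gives $f(H)Hf(H)\geq(\lambda-\delta)f(H)^2$ immediately, and $K$ is compact. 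Your version first obtains $g(H_0)\,i[H_0,A]^0\,g(H_0)\geq\mu(E-\delta)g(H_0)^2$ and then replaces $g(H_0)$ by $g(H)$ up to compacts. That does work, but the step asserting that $g(H)\,i[H_0,A]^0\,g(H)-g(H_0)\,i[H_0,A]^0\,g(H_0)$ is compact does not follow solely from compactness of $g(H)-g(H_0)$: since $i[H_0,A]^0=\Psi'(p^2/2)p^2$ is unbounded, you additionally need to know that $i[H_0,A]^0\,g(H)$ (and $g(H_0)\,i[H_0,A]^0$) is bounded, which one gets from (H.3) and $\Dom(H)=\Dom(H_0)$ via the closed graph theorem. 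The paper's direct route bypasses this entirely. Your treatment of the compactness of $g(H)\,i[V,A]_{-2}\,g(H)$ via the factorisation through $\langle x\rangle V$ and $\langle x\rangle^{-1}A$ matches the paper's (it uses \eqref{eq1_4} to justify that $g(H)$ maps $\Dom(\langle x\rangle)$ into $\Dom(H_0)\cap\Dom(\langle x\rangle)$), and the concluding appeal to the virial theorem in the form of \cite[Cor.~7.2.11]{AmBoGe} is exactly what the paper invokes.
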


\begin{proof}
Take $f\in C_0^\infty(\lambda-\delta,\lambda+\delta)$ for $\lambda>0$ and
$\delta>0$ such that $\lambda-\delta>0$. We estimate
\begin{align}
f(H)i[H,A]_{-2}f(H)
&=
f(H)\left(\Psi'(p^2/2)p^2+i[V,A]_{-2}\right)f(H)\nonumber\\
&\geq
\mu f(H)Hf(H)+f(H)\left(-\mu V+i[V,A]_{-2}\right)f(H)\nonumber\\
&\geq
\mu(\lambda-\delta)f(H)^2+K
\label{mourre_estimate1}
\end{align}
with
$$
K=-\mu f(H)Vf(H)+f(H)i[V,A]_{-2}f(H),
$$
where we used (H.4). Condition (H.5) implies that $f(H)Vf(H)$ is a compact operator. We show that $f(H)i[V,{A}]_{-2}f(H)$
is also compact. By the Helffer-Sj\"ostrand formula and \eqref{eq1_4}, $f(H)\Dom(\langle x\rangle)\subset\Dom(H_0)\cap
\Dom(\langle x\rangle)$ holds. We then obtain
\begin{equation*}
\inner{\psi}{f(H)[V,A]_{-2}f(H)\varphi}
=\inner{\langle x\rangle Vf(H)\psi}{\langle x\rangle^{-1}Af(H)\phi}-\inner{\langle x\rangle^{-1}Af(H)\psi}{\langle x\rangle Vf(H)\varphi},
\end{equation*}
for every $\psi, \varphi\in \Dom(\langle x\rangle)$.
This shows that $f(H)i[V,{A}]_{-2}f(H)$ is compact by (H.2) and (H.5). The Mourre inequality \eqref{mourre_estimate1}
then completes the proof by making use of \cite[Cor. 7.2.11]{AmBoGe}.
\end{proof}

\begin{example}\label{ex1_atsuhide}
{\rm
The function $\Psi(u)=(2u+m^{2/\alpha})^{\alpha/2}-m$ with $1\leq\alpha<2$ and $m>0$ satisfies conditions (H.1)-(H.4). Indeed,
\begin{equation*}
2u \Psi'(u)=\alpha\Psi(u) +
\alpha m\left( 1-m^{\frac{2}{\alpha}-1}(2u+m^{\frac{2}{\alpha}})^{\frac{\alpha}{2}-1} \right) \geq
\alpha\Psi(u)
\end{equation*}
and (H.3)-(H.4) hold. It can be directly verified that $\tilde{V}(x)=\langle x\rangle V(x)$ belongs to $L^q(\RR^d)$
with $q=2$ for $d<2\alpha$, and $q>\frac{d}{\alpha}$ for $d\geq 2\alpha$, satisfies (H.5) (see also \cite[Prop. 1.5]{Is}).
This includes Coulomb-like local singularities.
}
\end{example}

\begin{example}\label{ex2_atsuhide}
{\rm
Let $\Psi(u)=(2u)^{\alpha/2}$ with $\alpha\geq1$. Clearly, $2u\Psi'(u)=\alpha\Psi(u)$ holds. The singular part $\tilde{V}$
as in Example \ref{ex1_atsuhide} satisfies (H.5), which can be seen in a similar manner as in the previous example.}
\end{example}

Due to (H.2), in Examples \ref{ex1_atsuhide}-\ref{ex2_atsuhide} we needed $\alpha\geq1$. If $V$ has a
smoothness as given in (H.6) below, we can relax to $\alpha>0$ without (H.2) and (H.5) (see Example \ref{ex3_atsuhide}).

\begin{assumption}
\label{ass2_edited_by_atsuhide}
The following condition holds:
\begin{enumerate}
\item[\rm(H.6) ]
The potential satisfies $V\in C^1(\RR^d)$, and
\begin{align*}
|V(x)| \leq c \braket{x}^{-\gamma}, \quad |\nabla V(x)| \leq c \braket{x}^{-1-\gamma}
\end{align*}
with suitable constants $c, \gamma > 0$, for all $x\in\R^d$.
\end{enumerate}
\end{assumption}

Note that (H.6) implies that $V$ is bounded and $H$ is self-adjoint on $\Dom(H)=\Dom(H_0)$.

\begin{remark}\label{rem2_atsuhide}
{\rm
It is seen that $\lim_{u\rightarrow\infty}\Psi(u)=\infty$ holds by (H.4) without (H.2). Indeed,
\begin{equation*}
\left(u^{-\delta}\Psi(u)\right)'=u^{-\delta}\left(\Psi'(u)-\delta u^{-1}\Psi(u)\right)\geq u^{-1-\delta}\Psi(u)
\left(\mu/2-\delta\right)>0
\end{equation*}
for $0<\delta<\mu/2$ and $0<u_0\leq u$. Thus $\Psi(u)\geq u_0^{-\delta}\Psi(u_0)u^\delta\rightarrow\infty$ as $u\rightarrow\infty$.
Moreover, we find by (H.4) that the set $\left\{u>0 : \Psi(u)\not=0, \Psi'(u)=0\right\}$ and $\Spec_{\rm p}(H_0)\setminus\{0\}$
are empty. Under (H.6), $\Spec_{\rm ess}(H_0)=\Spec_{\rm ess}(H)$ also holds.}
\end{remark}

First we prove the counterparts of Lemma \ref{lem1_atsuhide} and Theorem \ref{thm1_atsuhide} for
the case when (H.6) holds. Without assuming (H.2) we can not use \eqref{eq1_4} directly since
$\langle p\rangle(H_0-z)^{-1}$ may not be bounded. However, making use of \cite{Mo} will yield
the stronger \eqref{eq2_5}.

\begin{lemma}\label{lem2_atsuhide}
Let {\rm (H.1)} and {\rm (H.3)} of Assumption \ref{ass1_atsuhide}, and {\rm (H.6)} of Assumption
\ref{ass2_edited_by_atsuhide} hold. Then $H$ belongs to class $C^1(A)$.
\end{lemma}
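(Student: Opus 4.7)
The plan is to mirror the architecture of Lemma \ref{lem1_atsuhide}, splitting $H=H_0+V$ and verifying the $C^1(A)$ criterion of \cite[Th. 6.2.10]{AmBoGe} for each piece, but replacing every step that relied on (H.2) or (H.5) by one that uses only the smoothness and decay provided by (H.6). Under (H.6) the potential $V$ is bounded, so Kato-Rellich yields self-adjointness of $H$ on $\Dom(H_0)$, and it suffices to show that the form $[H,A]$ on $\Dom(H_0)\cap\Dom(A)$ extends boundedly as a map $\Dom(H_0)\to\Dom(H_0)^*$, and that $\{\psi\in\Dom(A):(H-z)^{-1}\psi\in\Dom(A)\}$ is a core for $A$.

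The kinetic part goes through essentially unchanged: the Fourier computation giving \eqref{eq1_1} used only (H.1) and (H.3), and \cite[Prop. II.1]{Mo} combined with (H.3) again produces $i[H_0,A]^0=\Psi'(p^2/2)p^2$ as a self-adjoint operator with domain containing $\Dom(H_0)$, yielding $H_0\in C^1(A)$. For the potential term I would exploit (H.6) directly: on $\mathscr{S}(\RR^d)$ one has $i[V,A]=-x\cdot\nabla V$, and the pointwise estimate $|x\cdot\nabla V(x)|\leq c\langle x\rangle^{-\gamma}$ from (H.6) shows that $x\cdot\nabla V$ extends to a bounded multiplication operator on $L^2(\RR^d)$. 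Consequently $V$, viewed as a bounded operator, belongs to $C^1(A)$ in the bounded sense, $[V,A]_{-2}$ is everywhere defined and bounded, and combining with the kinetic piece produces an estimate analogous to \eqref{eq1_3}.

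For the invariance clause, I would pick $z$ in a half-plane where $\|V(H_0-z)^{-1}\|<1$ and expand
\begin{equation*}
(H-z)^{-1}=(H_0-z)^{-1}\bigl(1+V(H_0-z)^{-1}\bigr)^{-1},
\end{equation*}
realising $(H-z)^{-1}$ as a composition of bounded operators each of which is $C^1(A)$; since $C^1(A)$ is stable under composition and inversion of bounded operators, $(H-z)^{-1}$ preserves $\Dom(A)$, which is the stronger invariance \eqref{eq2_5} announced before the statement, and the Hilbert identity transfers this to all $z\in\rho(H)$. The main obstacle, as flagged in the remark preceding the lemma, is that without (H.2) the bound $\langle p\rangle\leq C\langle H_0\rangle$ is not available, so the $\langle x\rangle^{-1}A\langle p\rangle^{-1}$ detour of Lemma \ref{lem1_atsuhide} breaks down. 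The resolution is precisely that (H.6) delivers simultaneous boundedness of $V$ and $x\cdot\nabla V$, so every occurrence of $V$ can be handled by elementary boundedness arguments and the Mourre/Fourier machinery is only required for the purely kinetic commutator.
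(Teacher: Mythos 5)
Your proposal reaches the same conclusion but routes the invariance step differently, and it has one gap worth closing. The paper establishes the invariance $(H-z)^{-1}\Dom(A)\subset\Dom(A)$ in a single stroke: it lifts the uniform bound \eqref{eq1_1} on $\langle H_0\rangle e^{i\theta A}\langle H_0\rangle^{-1}$ to a bound \eqref{eq2_1} on $\langle H\rangle e^{i\theta A}\langle H\rangle^{-1}$ (using only that $V$ is bounded, so $\langle H\rangle\langle H_0\rangle^{-1}$ and its inverse are bounded), computes $[H,A]^0 = [H_0,A]^0 + ix\cdot\nabla V$ with bounded potential commutator by (H.6), and then applies \cite[Prop.\ II.2]{Mo} directly to $H$. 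Your version instead aims to treat $H_0$ and $V$ as separate $C^1(A)$ objects and recover the invariance for $(H-z)^{-1}$ via the Neumann-series factorisation $(H-z)^{-1}=(H_0-z)^{-1}(1+V(H_0-z)^{-1})^{-1}$ together with stability of the bounded-operator $C^1(A)$ class under products and inverses. That algebraic fact is correct, and since $V$ is a bounded multiplication operator with $[V,A]=ix\cdot\nabla V$ bounded by (H.6), $V\in C^1(A)$ as a bounded operator is also fine.

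The gap is in the sentence asserting that \cite[Prop.\ II.1]{Mo} together with (H.3) ``yields $H_0\in C^1(A)$''. Proposition II.1 of Mourre only identifies the commutator form $i[H_0,A]^0$ and gives it a self-adjoint realisation; it does not by itself supply the resolvent-invariance (or core) condition that the $C^1(A)$ definition demands. To get $(H_0-z)^{-1}\in C^1(A)$ you still need the analogue of the invariance step for $H_0$ alone, which in this framework comes from \cite[Prop.\ II.2]{Mo} applied to $H_0$ using \eqref{eq1_1} and the boundedness of $[H_0,A]^0$ (or from a direct Fourier check that the resolvent multiplier preserves the dilation domain). Once that is in place your Neumann-series argument does produce \eqref{eq2_5} and, with the form bound \eqref{eq2_6}, the conclusion via \cite[Th.\ 6.2.10(a)]{AmBoGe}. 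Notice, though, that patching the gap this way reduces your argument to using Prop.\ II.2 anyway, just on $H_0$ instead of $H$, so the paper's route — applying it once to $H$ — is shorter and avoids the resolvent expansion entirely; the factorisation buys nothing here because nothing about (H.6) makes it easier to handle $V(H_0-z)^{-1}$ than to handle the bounded operator $V$ directly inside \eqref{eq2_4}.
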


\begin{proof}
From \eqref{eq1_1} and by writing
\begin{equation*}
\langle H\rangle e^{i\theta A}\langle H\rangle^{-1}=\langle H\rangle\langle H_0\rangle^{-1}
\langle H_0\rangle e^{i\theta A}\langle H_0\rangle^{-1}\langle H_0\rangle\langle H\rangle^{-1},
\end{equation*}
we obtain
\begin{equation}
\sup_{|\theta|\leq1}\big\|\langle H\rangle e^{i\theta A}\langle H\rangle^{-1}\big\|<\infty.
\label{eq2_1}
\end{equation}
Since $V$ is bounded and differentiable, a direct computation on $\mathscr{S}(\RR^d)$
gives
\begin{equation*}
[V,A]=ix\cdot\nabla V,
\end{equation*}
which is a bounded operator due to assumption (H.6). Therefore the commutator $[H,A]^0$ can be defined by
\begin{equation}
[H,A]^0=[H_0,A]^0+ix\cdot\nabla V,
\label{eq2_4}
\end{equation}
with $\Dom([H,A]^0)=\Dom([H_0,A]^0)\supset\Dom(H_0)$. By \eqref{eq2_1}-\eqref{eq2_4} and \cite[Prop. II.2]{Mo}
we then have the inclusion
\begin{equation}
\left(H-z\right)^{-1}\Dom(A)\subset\Dom(A).
\label{eq2_5}
\end{equation}
Note also that for $\psi\in\Dom(A)\cap\Dom(H)$ the estimate
\begin{equation}
\left|\inner{A\psi}{H\psi}-\inner{A\psi}{H\psi}\right|\leq C\left(\|H\psi\|^2+\|\psi\|^2\right)
\label{eq2_6}
\end{equation}
holds. Relations \eqref{eq2_5}-\eqref{eq2_6} and \cite[Th. 6.2.10(a)]{AmBoGe}
then imply that $H$ belongs to class $C^1(A)$.
\end{proof}

\begin{theorem}\label{thm2_atsuhide}
Let {\rm (H.1)}, {\rm (H.3)} and {\rm (H.4)} of Assumption \ref{ass1_atsuhide}, and {\rm (H.6)} of Assumption
\ref{ass2_edited_by_atsuhide} hold. Then $\Spec_{\rm p}(H)\setminus\{0\}$ is discrete and the multiplicity of its elements is
at most finite. The only possible accumulation point of $\Spec_{\rm p}(H)$ is zero.
\end{theorem}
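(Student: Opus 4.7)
The plan is to parallel the proof of Theorem \ref{thm1_atsuhide}, replacing Lemma \ref{lem1_atsuhide} by Lemma \ref{lem2_atsuhide} to secure the $C^1(A)$-regularity of $H$ under the present hypotheses. Thanks to that lemma the form commutator $[H,A]_{-2}$ is well defined, and on $\Dom(H_0)$ it agrees with $[H_0,A]^0 + i\, x\cdot\nabla V$ by \eqref{eq2_4}, where the second term is bounded by (H.6). The strategy is to establish a strict Mourre inequality on every compact subinterval of $(0,\infty)$ and then invoke \cite[Cor. 7.2.11]{AmBoGe}.

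For $\lambda>0$ and $0<\delta<\lambda$, I would take $f\in C_0^\infty(\lambda-\delta,\lambda+\delta)$ and use (H.4) in the form $\Psi'(p^2/2)p^2 \geq \mu\Psi(p^2/2) = \mu H - \mu V$ to obtain
\begin{equation*}
f(H)\, i[H,A]_{-2}\, f(H) \;\geq\; \mu(\lambda-\delta) f(H)^2 + K,
\qquad K = -\mu f(H) V f(H) + f(H)\, i(x\cdot\nabla V)\, f(H),
\end{equation*}
since $f(H) H f(H) \geq (\lambda-\delta) f(H)^2$ by functional calculus on the support of $f$.

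The main obstacle is the compactness of $K$: assumption (H.5) is no longer available to make $\langle x\rangle V$ relatively $H_0$-compact. Under (H.6), however, both $V$ and $x\cdot\nabla V$ are bounded and decay as $\langle x\rangle^{-\gamma}$, so it suffices to show that $g(x) f(H)$ is compact whenever $g$ is bounded and vanishes at infinity. Approximating $g$ uniformly by cutoffs $g\,\mathbf 1_{\{|x|\leq R\}}$, this reduces to showing that $\mathbf 1_{\{|x|\leq R\}} f(H)$ is compact. Using a Helffer--Sj\"ostrand representation of $f(H)$ together with the resolvent identity $(z-H)^{-1} = (z-H_0)^{-1} - (z-H_0)^{-1} V (z-H)^{-1}$ and the boundedness of $V$, this in turn reduces to the compactness of $\mathbf 1_{\{|x|\leq R\}}(z-H_0)^{-1}$ for $z\notin\Spec(H_0)$. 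Since $\Psi(u)\to\infty$ as $u\to\infty$ by Remark \ref{rem2_atsuhide}, the Fourier multiplier symbol $(z-\Psi(\xi^2/2))^{-1}$ vanishes at infinity, and the standard $a(x)b(p)$-compactness criterion (both symbols vanishing at infinity) applies, yielding compactness of $K$.

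The strict Mourre estimate together with $H\in C^1(A)$ then enables the application of \cite[Cor. 7.2.11]{AmBoGe}, showing that $\Spec_{\rm p}(H) \cap [\lambda-\delta,\lambda+\delta]$ is finite and that each eigenvalue in this interval has finite multiplicity. Letting $\lambda$ range over $(0,\infty)$ with $\delta$ arbitrary in $(0,\lambda)$ yields the discreteness of $\Spec_{\rm p}(H)\setminus\{0\}$ with finite-multiplicity eigenvalues, leaving $0$ as the only possible accumulation point.
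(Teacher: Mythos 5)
Your proof follows essentially the same route as the paper: Lemma \ref{lem2_atsuhide} for the $C^1(A)$-regularity, the Mourre inequality derived from (H.4) with the bounded error term $K$, and then \cite[Cor. 7.2.11]{AmBoGe}; you additionally spell out the compactness of $K$ (via Helffer--Sj\"ostrand, the resolvent identity, and the $a(x)b(p)$-compactness criterion, using $\Psi(u)\to\infty$ from Remark \ref{rem2_atsuhide}), a point the paper simply asserts. One small terminological slip: what you obtain is a Mourre estimate \emph{with} a compact error term $K$ rather than a strict one, and it is exactly this non-strict form that \cite[Cor. 7.2.11]{AmBoGe} handles to give finiteness of the point spectrum in compact subintervals of $(0,\infty)$ and finiteness of multiplicities.
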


\begin{proof}
Take $f\in C_0^\infty(\lambda-\delta,\lambda+\delta)$ for $\lambda>0$ and $\delta>0$
such that $\lambda-\delta>0$. Since $(\mu V+x\cdot\nabla V)f(H)$ is a compact operator, we have the Mourre inequality
\begin{align}
f(H)i[H,A]^0 f(H)&=f(H)\left( \Psi'(p^2/2)p^2-x\cdot\nabla V\right)f(H)\nonumber\\
&\geq\mu(\lambda-\delta)f(H)^2-f(H)(\mu V+x\cdot\nabla V)f(H),
\label{mourre_estimate2}
\end{align}
where we used (H.4). Lemma \ref{lem2_atsuhide} and the Mourre inequality \eqref{mourre_estimate2} complete the proof
again by an application of \cite[Cor. 7.2.11]{AmBoGe}.
\end{proof}

\begin{example}\label{ex3_atsuhide}
{\rm
Let $\Psi(u)=(2u+m^{2/\alpha})^{\alpha/2}-m$ with $0<\alpha<2$ and $m>0$ or $\Psi(u)=(2u)^{\alpha/2}$ with $\alpha>0$. If $V$ satisfies {\rm (H.6)} of Assumption \ref{ass2_edited_by_atsuhide}, then the claim of Theorem \ref{thm2_atsuhide} holds for $H=\Psi(p^2/2)+V$.}
\end{example}

By the above arguments, we can prove absence of embedded eigenvalues above zero as follows.

\begin{theorem}{\label{MG1}}
Let conditions {\rm (H.1)}, {\rm (H.3)} and {\rm (H.4)} of Assumption \ref{ass1_atsuhide} and {\rm (H.6)} of
Assumption \ref{ass2_edited_by_atsuhide} hold, and suppose that $V$ satisfies
 \begin{align*}
        -\mu V -x\cdot \nabla V \geq 0.
 \end{align*}
Then $H$ has no positive eigenvalue.
\end{theorem}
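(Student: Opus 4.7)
The plan is to combine the Mourre inequality already established in the proof of Theorem \ref{thm2_atsuhide} with the virial theorem for operators of class $C^1(A)$, which applies here by Lemma \ref{lem2_atsuhide}. The extra sign condition $-\mu V - x\cdot\nabla V \geq 0$ is precisely what is needed to eliminate the compact remainder in the Mourre estimate, leaving a strict Mourre estimate (with no compact correction) on the whole positive half-line.

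First, I would fix an arbitrary $\lambda_0>0$, assume for contradiction that $\lambda_0$ is an eigenvalue of $H$ with normalized eigenfunction $\psi$, and pick $\delta\in(0,\lambda_0)$ together with a test function $f\in C_0^\infty(\lambda_0-\delta,\lambda_0+\delta)$ satisfying $f(\lambda_0)=1$. Re-running the computation behind \eqref{mourre_estimate2} gives the operator inequality
\begin{equation*}
f(H)\, i[H,A]^0 f(H)\;\geq\;\mu(\lambda_0-\delta)f(H)^2 \;-\; f(H)\bigl(\mu V + x\cdot\nabla V\bigr)f(H).
\end{equation*}
By hypothesis, $-\mu V - x\cdot\nabla V\geq 0$ as a bounded multiplication operator, and since $f(H)$ is self-adjoint, the last term is non-negative. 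Thus the estimate reduces to the clean strict Mourre bound
\begin{equation*}
f(H)\,i[H,A]^0 f(H)\;\geq\;\mu(\lambda_0-\delta)\,f(H)^2.
\end{equation*}

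Next I would apply this inequality to the eigenfunction $\psi$. Since $f(\lambda_0)=1$ and $H\psi=\lambda_0\psi$, functional calculus yields $f(H)\psi=\psi$, so that the inequality sandwiched between $\psi$ and $\psi$ becomes
\begin{equation*}
\inner{\psi}{i[H,A]^0\psi}\;\geq\;\mu(\lambda_0-\delta)\,\|\psi\|^2.
\end{equation*}
On the other hand, the virial theorem in the Mourre framework (see \cite[Prop.~7.2.10]{AmBoGe}) applies because $H\in C^1(A)$ by Lemma \ref{lem2_atsuhide}, and tells us that the expectation of $i[H,A]^0$ in any eigenvector of $H$ vanishes, i.e.\ $\inner{\psi}{i[H,A]^0\psi}=0$. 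Combining these two facts gives $0\geq \mu(\lambda_0-\delta)\|\psi\|^2$, which together with $\mu>0$, $\lambda_0-\delta>0$ and $\|\psi\|=1$ is a contradiction. Hence no such eigenvalue $\lambda_0>0$ can exist.

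The only subtle step is the use of the virial identity: one must ensure that $\inner{\psi}{i[H,A]^0\psi}$ is indeed well-defined and equals zero for an eigenvector $\psi$ that \emph{a priori} need not lie in $\Dom(A)$. This is exactly what the $C^1(A)$ property established in Lemma \ref{lem2_atsuhide} guarantees, via the standard argument using the resolvent formulation of $C^1(A)$ together with the boundedness of the form $[H,A]^0$ relative to $\langle H\rangle$ (inequality \eqref{eq2_6}). Everything else is direct algebraic manipulation with the already-proved Mourre estimate, so I do not expect further obstacles.
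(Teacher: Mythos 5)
Your proof is correct and follows essentially the same route as the paper: both establish the strict Mourre inequality $f(H)\,i[H,A]^0f(H)\geq\mu a\,f(H)^2$ without any compact remainder, using precisely the hypothesis $-\mu V-x\cdot\nabla V\geq 0$ to drop the lower-order term. The only difference is cosmetic: the paper then simply cites \cite[Cor.~7.2.11]{AmBoGe} (or \cite{Mo}) for the conclusion, whereas you explicitly unwind that citation via the virial theorem for $C^1(A)$ operators, which is exactly what that corollary encapsulates.
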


\begin{proof}
For $f\in C_0^\infty(a,b)$ with $0<a<b<\infty$, we have the Mourre inequality
\begin{align}
     f(H)i[H,A]^0 f(H)
                          & \geq  \mu f(H) H f(H) + f(H) (-\mu V-x\cdot \nabla V) f(H) \nonumber\\
                          & \geq  \mu a f(H)^2 \label{strict_mourre}
\end{align}
without any compact operators. This yields the absence of the eigenvalues in $(a,b)$ immediately (for a
proof see \cite[Cor. 7.2.11]{AmBoGe} or \cite[Th.]{Mo}).
\end{proof}

The estimate \eqref{strict_mourre} is called the strict Mourre inequality, which we will use again below. 

\begin{example}\label{ex4_atsuhide}
{\rm
Let $\Psi(u)=(2u)^{\alpha/2}$ with $\alpha>0$, and
\begin{equation*}
V(x)=-\frac{C}{(1+|x|^2)^{\beta/2}}
\end{equation*}
with $C>0$ and
$0<\beta\leq\alpha$. Then $H=\Psi(p^2/2)+V$ satisfies the assumptions of Theorem \ref{MG1}. Indeed,
with $\mu=\alpha$ we have
\begin{equation*}
-\alpha V-x\cdot\nabla V=C\left(\alpha\langle x\rangle^{-\beta}-\beta|x|^2\langle x\rangle^{-\beta-2}\right)
=C\left((\alpha-\beta)\langle x\rangle^{-\beta}+\beta\langle x\rangle^{-\beta-2}\right)>0.
\end{equation*}
Then we have the same conclusion as in Example \ref{EX3_edited_by_atsuhide}.}
\end{example}

\begin{theorem}{\label{MG2}}
Let conditions {\rm (H.1)}, {\rm (H.3)} and {\rm (H.4)} of Assumption \ref{ass1_atsuhide} and {\rm (H.6)} of
Assumption \ref{ass2_edited_by_atsuhide} hold, and suppose that there exists a function $F: \R^d\to \R^+$
such that
 \begin{align}
       \Psi'(p^2/2)p^2 \geq F(x).    \label{inq248}
 \end{align}
If there exists $\vep>0$ such that
 \begin{align}
        -\vep \mu V + (1-\vep) F - x\cdot \nabla V \geq 0,   \label{inq249}
 \end{align}
then $H$ has no positive eigenvalue.
\end{theorem}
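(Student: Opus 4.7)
The plan is to establish a strict Mourre inequality on every closed subinterval $[a,b] \subset (0,\infty)$ and then invoke \cite[Cor.~7.2.11]{AmBoGe} just as in the proof of Theorem \ref{MG1}. The $C^1(A)$ property of $H$ is already given by Lemma \ref{lem2_atsuhide} under the present assumptions, and the commutator identity
\begin{equation*}
i[H,A]^0 = \Psi'(p^2/2)p^2 - x\cdot\nabla V
\end{equation*}
from \eqref{eq2_4} is also available. So the only genuinely new ingredient is to push the lower bound on $i[H,A]^0$ further by exploiting the new operator inequality \eqref{inq248}.

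The key algebraic step I would carry out is to split the kinetic commutator term by writing
\begin{equation*}
\Psi'(p^2/2)p^2 = \varepsilon\,\Psi'(p^2/2)p^2 + (1-\varepsilon)\,\Psi'(p^2/2)p^2,
\end{equation*}
and estimate the first piece from below using (H.4), i.e.\ $2u\Psi'(u)\geq\mu\Psi(u)$, which gives $\varepsilon\,\Psi'(p^2/2)p^2\geq\varepsilon\mu H_0 = \varepsilon\mu H - \varepsilon\mu V$, and the second piece using \eqref{inq248}, which gives $(1-\varepsilon)\,\Psi'(p^2/2)p^2\geq(1-\varepsilon)F(x)$. Adding these two bounds yields
\begin{equation*}
i[H,A]^0 \geq \varepsilon\mu H + \bigl(-\varepsilon\mu V + (1-\varepsilon)F - x\cdot\nabla V\bigr),
\end{equation*}
and the bracket on the right is non-negative by the hypothesis \eqref{inq249}. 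Sandwiching this operator inequality between $f(H)$ with $f\in C_0^\infty(a,b)$ and using $f(H)Hf(H)\geq a\,f(H)^2$, I obtain the strict Mourre inequality
\begin{equation*}
f(H)\,i[H,A]^0 f(H) \geq \varepsilon\mu a\, f(H)^2,
\end{equation*}
valid on every $(a,b)\subset(0,\infty)$ with $a>0$, without any compact remainder. By the virial-style argument of \cite[Cor.~7.2.11]{AmBoGe} (equivalently the argument used at the end of Theorem \ref{MG1}'s proof), this rules out any eigenvalue of $H$ in $(a,b)$, and varying $a,b$ rules them out on all of $(0,\infty)$.

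The main obstacle I anticipate is a rigorous justification of the operator inequality $\Psi'(p^2/2)p^2\geq F(x)$ as used between vectors in a suitable form domain, and the fact that $f(H)$ maps into a domain where all the above quadratic forms are defined; this is already handled by Lemma \ref{lem2_atsuhide} together with the Helffer--Sj\"ostrand representation of $f(H)$ used in the proof of Theorem \ref{thm1_atsuhide}, so no new technical input is needed. Beyond this, the proof is a direct adaptation of Theorem \ref{MG1}, the only difference being that the pointwise coercivity $-\mu V - x\cdot\nabla V\geq 0$ used there is replaced by the weaker condition \eqref{inq249}, whose ``loss'' $\varepsilon\mu V$ is compensated by the kinetic reserve $(1-\varepsilon)F$ supplied by \eqref{inq248}.
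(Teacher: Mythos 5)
Your proposal is correct and coincides essentially with the paper's own argument: the same $\varepsilon$-split of $\Psi'(p^2/2)p^2$, the same use of (H.4) on one piece and \eqref{inq248} on the other, and the same conclusion via the strict Mourre inequality and \cite[Cor.~7.2.11]{AmBoGe}. No genuine differences in approach.
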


\begin{proof}
For $f\in C_0^\infty(a,b)$ with $0<a<b<\infty$, our argument is based on the estimates
\begin{align*}
  & f(H) \left( \Psi'(p^2/2)p^2 - x\cdot \nabla V\right)  f(H) \nonumber\\
  & \qquad =  f(H) \left( \vep \Psi'(p^2/2)p^2 + (1-\vep)\Psi'(p^2/2)p^2 - x\cdot \nabla V\right) f(H) \nonumber\\
  & \qquad \geq  f(H) \left( \vep \mu H - \vep\mu V(x) + (1-\vep)F(x) - x\cdot \nabla V\right) f(H) \nonumber\\
  & \qquad \geq  \vep \mu a f(H)^2.
\end{align*}
The strict Mourre inequality yields the absence of the eigenvalues in $(a,b)$.
\end{proof}

\begin{example}{\label{EX6}}
{\rm
Let $\Psi(u)=(2u)^{\alpha/2}$ with $0<\alpha<d$. The generalized Hardy inequality (\cite[Thm. 2.5]{H77}) says that
there exists $C_\alpha>0$ such that
\begin{align*}
      \Psi(p^2/2) \geq \frac{C_\alpha}{|x|^{\alpha}},
\end{align*}
where we used the assumption that $\alpha<d$. The condition \eqref{inq249} then becomes
\begin{align}
   -\vep \alpha V(x)  + (1-\vep)\frac{C_\alpha}{|x|^{\alpha}} - x\cdot \nabla V(x) \geq 0.
   \label{inq462}
\end{align}
Take
\begin{align*}
    V(x) = \frac{C}{\left(1+|x|^2\right)^\nu},
\end{align*}
with $C>0$ and $\nu>\alpha/2$ satisfying (H.6). Also, it is seen that \eqref{inq462} holds for sufficiently small $C>0$. We then conclude that
$H=\Psi(p^2/2)+V$ has no positive eigenvalue by Theorem \ref{MG2}.
}
\end{example}

\begin{example}\label{ex_itaru}
{\rm
Suppose that $d\geq 3$.
Let $\Psi(u)=(2u+m^{2/\alpha})^{\alpha/2}-m$ with $0<\alpha < 2$ and $m>0$.
As stated in Example \ref{ex1_atsuhide}, the function $\Psi(u)$ satisfies (H.1), (H.3) and (H.4). Next we give an example of $F(x)$ satisfying \eqref{inq248}.
As shown in Example \ref{ex1_atsuhide}, we have $2u\Psi'(u) \geq \alpha \Psi(u)$.
Hence it is enough to find a function $F(x)$ such that
\begin{align}
  \alpha \Psi(p^2/2) \geq F(x).  \label{ineq1}
\end{align}
For simplicity, we consider the case $m=1$.
We have the lower estimates
\begin{align*}
  \Psi(u)  \geq
  \begin{cases}
    (3^{\alpha/2}-1)u \qquad 0\leq u\leq 1, \\
    (3^{\alpha/2}-1)u^{\alpha/2} \qquad u>1.
  \end{cases}
\end{align*}
This gives
$$
\alpha \Psi(p^2/2) \geq \min\big\{D_{1,\alpha}p^2, D_{2,\alpha}|p|^\alpha\big\},
$$
where $D_{1,\alpha}=2^{-1}(3^{\alpha/2}-1)$, $D_{2,\alpha} = 2^{-\alpha/2}(3^{\alpha/2}-1)$.
Thus,
\begin{align*}
  (\alpha \Psi(p^2/2))^{-1} \leq
 \max\big\{ (D_{1,\alpha}p^2)^{-1}, (D_{2,\alpha}|p|^\alpha)^{-1}\big\}
 \leq D_{1,\alpha}^{-1} p^{-2} + D_{2,\alpha}^{-1} |p|^{-\alpha}.
\end{align*}
By the generalized Hardy inequality $|p|^\gamma \geq C_\gamma |x|^{-\gamma}$ and Fourier transform
we obtain
\begin{align*}
  |x|^\gamma \geq C_\gamma |p|^{-\gamma}, \qquad 0<\gamma<d.
\end{align*}
We use this inequality for the choices $\gamma=2$ and $\gamma=\alpha$. Thus we get
\begin{align*}
  (\alpha \Psi(p^2/2))^{-1} \leq D_{1,\alpha}^{-1} C_2^{-1}|x|^2 + D_{2,\alpha}^{-1} C_\alpha^{-1}|x|^\alpha
\end{align*}
see \cite[Cor. 10.12]{Schm12}. Note that here we used the condition $d\geq 3$. This implies that
\begin{align*}
    \alpha \Psi(p^2/2) \geq  (D_{1,\alpha}^{-1} C_2^{-1}|x|^2 + D_{2,\alpha}^{-1} C_\alpha^{-1}|x|^\alpha)^{-1}.
\end{align*}
Thus we arrive at the inequality \eqref{ineq1} with the lower bound
\begin{align}
  F(x) := (D_{1,\alpha}^{-1} C_2^{-1}|x|^2 + D_{2,\alpha}^{-1} C_\alpha^{-1}|x|^\alpha)^{-1}. \label{Fx}
\end{align}
Next, we discuss the condition (H.6) and \eqref{inq249}.
In the current setup, since $\mu=\alpha$, \eqref{inq249} is of the form
\begin{align}
  -\varepsilon \alpha V(x)+(1-\varepsilon)F(x) -x\cdot \nabla V(x) \geq 0. \label{313x}
\end{align}
Take the function $F$ defined in \eqref{Fx}.
Then $F$ is positive and $F(x)=O(|x|^{-2})$ as $|x|\to \infty$.
Consider the following example
\begin{align*}
  V(x) = \frac{C}{(1+|x|^2)^\nu},
\end{align*}
with $C>0$ and $\nu\leq1$. Then (H.6) clearly holds. Note that $V=O(|x|^{-2\nu})$ and $x\cdot \nabla V=O(|x|^{-2\nu})$.
Take $\varepsilon=1/2$. Then, for sufficiently small $C>0$,
\eqref{313x} holds. Therefore, by Theorem \ref{MG2}, $H=\Psi(p^2/2)+V$ has no positive eigenvalue.
}
\end{example}

\section{Extended Birman-Schwinger principle}
In this section we will show non-existence of zero eigenvalues by an extension of the classical Birman-Schwinger
principle, which we discuss first.
Consider a classical Schr\"odinger operator $H(\gamma)=H_0-\gamma V$, $H_0=-\frac{1}{2}\Delta$,
such that $V\geq 0$ is relatively form-compact with respect to $H_0$. A consequence of this is that
$\Spec_{\mathrm{ess}}(H({\gamma}))=\Spec_{\mathrm{ess}}(H_{0})=[0,\infty)$. Also, $H({\gamma})$ is
bounded from below since $-\gamma V$ is relatively form-bounded with an infinitesimally small relative
bound.

We briefly recall the classical Birman-Schwinger principle, for details see e.g. \cite{LHB}.
For $\lambda\geq0$ we have $H({\gamma})\psi=-\lambda\psi$ exactly when $(H_0+\lambda)\psi=\gamma V\psi$.
This means that $V\psi\in \Dom((H_{0}+\lambda)^{-1})$ and $\psi=\gamma(H_{0}+\lambda)^{-1}V\psi$. Since
$\psi\in \Dom(V^{1/2})$, the identity $V^{-{1/2}} V^{1/2}\psi=\gamma(H_{0}+\lambda)^{-1}V^{1/2}V^{1/2}\psi
$ gives $V^{1/2}\psi=\gamma V^{{1/2}}(H_{0}+\lambda)^{-1}V^{1/2}V^{1/2}\psi$. Thus for a $\lambda\geq0$ we
have $H({\gamma})\psi=-\lambda\psi$ exactly when
\begin{equation*}
V^{1/2}(H_{0}+\lambda)^{-1}V^{1/2}\varphi= \frac{1}{\gamma} \varphi,
\end{equation*}
where $\varphi= V^{1/2}\psi$. Define the operator
\begin{equation*}
K_{\lambda}= V^{1/2}(H_{0}+\lambda)^{-1}V^{1/2},
\end{equation*}
called Birman--Schwinger kernel.
It can be shown that $K_\lambda$ is a compact operator for $\lambda > 0$ whenever (i) $d=1$,
$V \in L^2(\R)$, (ii) $d=2$, $V\in L^p(\R^2)$, $p >1$, and for $\lambda \geq 0$ whenever (iii) $d \geq 3$,
$V\in L^{d/2}(\R^d)$; for a proof we refer to \cite[Lem. 4.144]{LHB}, see also \cite[p61]{Sim}.
Note that
\begin{equation*}
-\lambda \in \Spec(H(\gamma)) \quad \mbox{if and only if} \quad\frac{1}{\gamma} \in\Spec(K_{\lambda}).
\end{equation*}
holds. Let $\lambda >0$ and denote by $N_\lambda(V) = \# \{\mu \leq -\lambda: \mu \, \mbox{is an eigenvalue
of} \, H(\gamma)\}$. Then the Birman-Schwinger principle says that
\begin{align*}
N_{\lambda}(V) = \dim {\mathbf 1}_{[1,\infty)}(K_{\lambda})
\quad \mbox{and} \quad
N_{0}(V)=\lim_{\lambda\downarrow0}N_{\lambda}(V) \leq \dim \mathbf {1}_{[1,\infty)}(K_{0})
\end{align*}
holds. Note that $N_{0}(V)$ does not include a count of zero eigenvalues.

We give now an upgraded variant of this which also includes zero eigenvalues, and replace $H(\gamma)$ by a
non-local Schr\"odinger operator. The Birman-Schwinger principle gives
\begin{equation*}
\dim \mathbf{1}_{(-\infty,0)}(H)\leq \dim\mathbf{1}_{[1,\infty)}(K_{0}).
\end{equation*}
while the extended Birman-Schwinger principle can be stated as
\begin{gather}
\label{bssb}
\dim\mathbf{1}_{(-\infty,0]}(H) \leq \dim \mathbf{1}_{[1,\infty)}(K_{0}),
\end{gather}
so that the left-hand side in \eqref{bssb} includes the number of non-positive eigenvalues of $H$. In particular,
whenever $\dim\mathbf{1}_{[1,\infty)} (K_{0})<1$, the implication is that $H$ has no non-positive eigenvalues.
In the case of the classical Laplacian as the kinetic energy operator, the extended Birman-Schwinger principle
is covered in \cite{Sim}, while the proof there holds in a larger framework, including the one in the present paper.
An extended Birman-Schwinger principle has been studied also in \cite{MV,T18} in another context, but since it
remained little known, we provide here a proof, following the main lines of a previous discussion in the monograph
\cite{LHB}. We discuss it for non-local Schr\"odinger operators with Bernstein functions of the Laplacian, and
leave further extensions to the reader.
Let $\Phi$ be a Bernstein function as given \eqref{bernrep}, with $b=0$, and consider $H_0=\Phi(-\Delta)$. Define
\begin{gather*}
\bar{N}_\lambda(V) = \dim \mathbf{1}_{(-\infty,-\lambda]}(H) =
\#\{\mbox{eigenvalues of }H\leq-\lambda\},\quad \lambda\geq0.
\end{gather*}
Note that $\bar{N}_{\lambda}(V)=N_{\lambda}(V)$ for $-\lambda<0$, and $\bar{N}_{0}(V) \geq N_{0}(V)$. The
extended Birman--Schwinger principle is as follows.

\begin{proposition}
\label{eBSP}%
Suppose that $V\leq0$ and it is $H_{0}$-form-compact. Let $\lambda > 0$. Then
\begin{align*}
\bar{N}_\lambda(V) = \dim \mathbf{1}_{[1,\infty)}(K_\lambda) \quad  \mbox{and} \quad
\bar{N}_{0}(V) \leq \dim \mathbf{1}_{[1,\infty)}(K_{0}).
\end{align*}
In particular, if $\dim\mathbf{1}_{[1,\infty)}(K_{0})=0$, then $H$ has no non-positive eigenvalues.
\end{proposition}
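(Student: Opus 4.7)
The plan is to establish the equality $\bar{N}_\lambda(V) = \dim \mathbf{1}_{[1,\infty)}(K_\lambda)$ at $\lambda > 0$ along classical Birman--Schwinger lines, and then to pass to $\lambda = 0$ by combining operator monotonicity of the family $\{K_\lambda\}$ with a quadratic form argument on the non-positive spectral subspace of $H$ that will absorb the zero eigenvalues into the count on the right.

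For $\lambda > 0$, the first step is to note that $H_0 + \lambda$ is invertible since $\Spec H_0 \subset [0,\infty)$, and that $H\psi = -\lambda\psi$ is equivalent, after multiplying $(H_0 + \lambda)\psi = |V|\psi$ by $|V|^{1/2}(H_0 + \lambda)^{-1}$, to the fixed point equation $K_\lambda \varphi = \varphi$ with $\varphi = |V|^{1/2}\psi$. The correspondence $\psi \leftrightarrow \varphi$ is bijective on eigenspaces, since $|V|^{1/2}\psi = 0$ together with $(H_0+\lambda)\psi = |V|\psi$ forces $\psi = 0$. Under the form-compactness hypothesis, $K_\lambda$ is a positive, self-adjoint, compact operator, and the resolvent identity
\begin{equation*}
(H_0+\lambda')^{-1} - (H_0+\lambda)^{-1} = (\lambda-\lambda')(H_0+\lambda)^{-1}(H_0+\lambda')^{-1}
\end{equation*}
yields $K_\lambda \leq K_{\lambda'}$ whenever $0 \leq \lambda' \leq \lambda$. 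Ordering the eigenvalues $\{\kappa_j(\lambda)\}_j$ of $K_\lambda$ in decreasing order, each $\kappa_j$ is continuous, monotone decreasing in $\lambda$, and tends to $0$ as $\lambda \to \infty$; a standard min--max/tracking argument then identifies $\#\{j : \kappa_j(\lambda) \geq 1\}$ with $\bar{N}_\lambda(V)$.

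For the inequality at $\lambda = 0$, monotonicity gives $K_0 \geq K_\lambda$ for every $\lambda > 0$, hence by min--max $\dim \mathbf{1}_{[1,\infty)}(K_0) \geq \dim \mathbf{1}_{[1,\infty)}(K_\lambda) = \bar{N}_\lambda$; letting $\lambda \downarrow 0$ already captures the strictly negative eigenvalues of $H$. To incorporate zero eigenvalues as well, let $\mathcal{M}$ denote the span of an orthonormal basis of eigenvectors of $H$ for non-positive eigenvalues, so that $\dim \mathcal{M} = \bar{N}_0(V)$. Writing $|V| f = H_0 f - H f$ for $f \in \mathcal{M}$ and expanding $\langle |V|^{1/2} f, K_0 |V|^{1/2} f\rangle = \|H_0^{-1/2}|V| f\|^2$, a direct calculation gives
\begin{equation*}
\langle |V|^{1/2} f, (K_0 - I)|V|^{1/2} f\rangle = -\langle f, Hf\rangle + \|H_0^{-1/2} H f\|^2 \geq 0,
\end{equation*}
since $H \leq 0$ on $\mathcal{M}$. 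By min--max, $\dim \mathbf{1}_{[1,\infty)}(K_0) \geq \dim |V|^{1/2}\mathcal{M}$, and the final corollary on non-positive eigenvalues is immediate.

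The main obstacle, as I see it, is verifying injectivity of $|V|^{1/2} : \mathcal{M} \to L^2$, to conclude $\dim |V|^{1/2}\mathcal{M} = \bar{N}_0(V)$. Rather than invoking unique continuation, I would close this point by observing that $|V|^{1/2} f = 0$ with $f \in \mathcal{M}$ gives $\langle f, |V| f\rangle = 0$ and hence $\langle f, H_0 f\rangle = \langle f, Hf\rangle \leq 0$; the operator inequality $H_0 \geq 0$ then forces $\langle f, H_0 f\rangle = 0$, so $H_0^{1/2} f = 0$. As $\Phi$ is a nontrivial Bernstein function with $b = 0$, the functional calculus identifies $\ker H_0$ with $\ker(-\Delta)$ on $L^2(\R^d)$, which is trivial, so $f = 0$ as needed.
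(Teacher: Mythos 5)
Your proof is correct but takes a genuinely different route from the paper's. The paper isolates an abstract coupling-constant lemma (Lemma~\ref{00}): it shows $\dim\mathbf{1}_{(-\infty,0]}(T+S)\leq\dim\mathbf{1}_{(-\infty,0)}(T+\gamma S)$ for $\gamma>1$ by a Rayleigh--Ritz argument on a trial subspace, using $0\notin\Spec_{\rm p}(T)$ to rule out equality; then Lemma~\ref{teranishi} feeds this into the classical Birman--Schwinger count for strictly negative eigenvalues and passes to the double limit $\lambda\uparrow 0$, $\gamma\downarrow 1$. You instead work directly with the kernel $K_0$ and the subspace $\mathcal{M}$ spanned by the non-positive eigenvectors of $H$: the algebraic identity
\begin{equation*}
\langle |V|^{1/2} f, (K_0 - I)|V|^{1/2} f\rangle = -\langle f, Hf\rangle + \|H_0^{-1/2} H f\|^2 \geq 0, \quad f\in\mathcal{M},
\end{equation*}
combined with min--max for the compact operator $K_0$, directly yields $\dim\mathbf{1}_{[1,\infty)}(K_0)\geq\dim|V|^{1/2}\mathcal{M}=\bar N_0(V)$, so no limiting arguments are needed. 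Your injectivity argument ($|V|^{1/2}f=0$ forces $\langle f,H_0f\rangle=0$, hence $f\in\ker H_0=\{0\}$) plays exactly the role that $0\notin\Spec_{\rm p}(T)$ plays in the paper's Lemma~\ref{00}, so both proofs rely on the same spectral triviality of $H_0$ at zero. The only point worth making fully explicit in your version is that $H_0^{-1/2}Hf$ is well defined for $f\in\mathcal{M}$: this follows because $H_0^{-1/2}|V|^{1/2}$ extends to a bounded operator and $f\in\Dom(H_0^{1/2})$, so $H_0^{-1/2}Hf = H_0^{1/2}f - (H_0^{-1/2}|V|^{1/2})|V|^{1/2}f$ gives a vector in $L^2$; once that is noted, the computation and the non-negativity are airtight. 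Your middle paragraph (monotonicity $K_0\geq K_\lambda$ giving the count of strictly negative eigenvalues) is correct but redundant, since the quadratic-form step already captures $\bar N_0(V)$ in one stroke.
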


\begin{proof}
The proof of the first part follows from Lemmas \ref{00}-\ref{teranishi} below by choosing $T=H_{0}$, $S=V$, and
$L_{0}=K_0$. The second part follows by having that $\bar{N}_{0}(V) \leq \dim\mathbf{1}_{[1,\infty)}(K_{0})=0$,
and thus $\Spec_{\mathrm{ess}}(H)=[0,\infty)$.
\end{proof}

\begin{corollary}
\label{coreBS}
Let $d \geq 3$ and suppose that $V \leq 0$ is $H_0$-form compact. If $\|K_0\| < 1$, then $H$ has no non-positive
eigenvalues.
\end{corollary}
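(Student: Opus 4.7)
The plan is to deduce the corollary as a short consequence of Proposition \ref{eBSP}. First I would note that since $V \leq 0$ is $H_0$-form-compact, the Birman--Schwinger kernel $K_0 = |V|^{1/2} H_0^{-1} |V|^{1/2}$ can be interpreted as a non-negative self-adjoint operator on $L^2(\R^d)$; the hypothesis $\|K_0\| < 1$ itself presupposes that it extends to a bounded operator. The role of the assumption $d \geq 3$ is precisely to guarantee boundedness (in fact, compactness) of $K_\lambda$ at the threshold $\lambda = 0$, as recorded in the discussion preceding Proposition \ref{eBSP} for kinetic terms given by Bernstein functions of the Laplacian.

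Next, I would invoke the elementary spectral-radius fact that for a bounded non-negative self-adjoint operator $T \geq 0$ one has $\Spec(T) \subseteq [0, \|T\|]$. Applying this to $T = K_0$ with $\|K_0\| < 1$ yields $\Spec(K_0) \subset [0, 1)$, so the spectral projection $\mathbf{1}_{[1,\infty)}(K_0)$ is the zero operator and in particular $\dim \mathbf{1}_{[1,\infty)}(K_0) = 0$.

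Finally, invoking the second assertion of Proposition \ref{eBSP} together with $\dim \mathbf{1}_{[1,\infty)}(K_0) = 0$ gives $\bar{N}_0(V) \leq 0$. By the definition of $\bar{N}_0(V)$ as the number of eigenvalues of $H$ in $(-\infty, 0]$, we conclude that $H$ has no non-positive eigenvalues, which is the claim.

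No substantive obstacle arises: the bulk of the work is already packaged inside Proposition \ref{eBSP}, and the only additional ingredient is the spectral-radius remark for positive self-adjoint operators. The one mild point worth verifying explicitly is that the self-adjointness and positivity of $K_0$ needed for the implication $\|K_0\| < 1 \Rightarrow \Spec(K_0) \subset [0, 1)$ remain valid in the non-local framework; this follows from the construction of $K_0$ as $|V|^{1/2} H_0^{-1} |V|^{1/2}$ with $V \leq 0$ and $H_0 = \Phi(-\Delta) \geq 0$, together with the form-compactness hypothesis that legitimises manipulating $|V|^{1/2}$ as a closed operator with appropriate domain.
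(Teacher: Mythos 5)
Your proof is correct and takes essentially the same route as the paper: both invoke the second assertion of Proposition \ref{eBSP}, and both rely on the elementary observation that for a bounded non-negative self-adjoint $K_0$ with $\|K_0\| < 1$ the spectral projection $\mathbf{1}_{[1,\infty)}(K_0)$ vanishes. The paper leaves this spectral-radius step implicit; you spell it out, which is fine.
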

\begin{proof}
By Proposition \ref{eBSP} we have $\bar{N}_0(V)\leq \dim\mathbf{1}_{[1,\infty)}(K_{0})=0$, and
$\Spec_{\mathrm{ess}}(H)=[0,\infty)$.
\end{proof}

We show the two lemmas referred to above.
\begin{lemma}
\label{00}
Let $\mathcal {K}$ be a separable Hilbert space over ${\mathbb{C}}$, and $T$ a self-adjoint operator on
$\mathcal{K}$ such that $\inf\operatorname{Spec}(T)=0$ and $0\not\in\operatorname{Spec}_{\mathrm{p}}(T)$.
Let $S$ be a self-adjoint operator such that $S\leq0$ and $L_{\lambda}=|S|^{1/2}(T+\lambda)^{-1}|S|^{1/2}$
is compact for $\lambda\geq0$. Take $\gamma>1$. Then $\dim\mathbf{1}_{(-\infty,0]}(T+S) \leq
\dim\mathbf{1}_{(-\infty,0)}(T+\gamma S)$.
\end{lemma}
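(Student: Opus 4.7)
The plan is to show that the spectral subspace $\mathcal{M}:=\mathrm{Ran}\,\mathbf{1}_{(-\infty,0]}(T+S)$ embeds into the strictly negative spectral subspace of $T+\gamma S$ via a min-max argument. The key algebraic identity is
\begin{equation*}
\langle \psi, (T+\gamma S)\psi\rangle = \langle \psi, (T+S)\psi\rangle + (\gamma-1)\langle \psi, S\psi\rangle
\leq -(\gamma-1)\langle \psi, |S|\psi\rangle,
\end{equation*}
valid for every $\psi\in\mathcal{M}$ since $\langle\psi,(T+S)\psi\rangle\leq 0$. Thus I only need strict negativity on $\mathcal{M}\setminus\{0\}$, and then read off the dimension inequality from min-max.

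First I would establish the operator-theoretic preliminaries. The compactness of $L_\lambda=|S|^{1/2}(T+\lambda)^{-1}|S|^{1/2}$ (with $T\geq 0$) gives that $|S|^{1/2}(T+\lambda)^{-1/2}$ is bounded (via $\|(T+\lambda)^{-1/2}|S|^{1/2}\varphi\|^2 = \langle\varphi,L_\lambda\varphi\rangle\leq\|L_\lambda\|\|\varphi\|^2$) and in fact $|S|^{1/2}$-and hence $\gamma S$-is infinitesimally form-small with respect to $T$. By the KLMN theorem $T+\gamma S$ is then self-adjoint on the form domain $\mathcal{Q}(T)$, and compactness of $L_\lambda$ upgrades to Weyl-type invariance so that $\mathrm{Spec}_{\mathrm{ess}}(T+\gamma S)\subset[0,\infty)$. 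In particular $T+\gamma S$ is bounded below and its negative spectrum is discrete.

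Next I would verify strict negativity of the form on $\mathcal{M}\setminus\{0\}$. Fix $\psi\in\mathcal{M}$, $\psi\neq0$; note $\mathcal{M}\subset\mathcal{Q}(T)=\mathcal{Q}(T+\gamma S)$. If $\langle\psi,|S|\psi\rangle>0$ then the displayed inequality above is strictly negative since $\gamma>1$. Otherwise $\langle\psi,|S|\psi\rangle=0$, so $|S|^{1/2}\psi=0$ and $S\psi=0$; then $\langle\psi,T\psi\rangle = \langle\psi,(T+S)\psi\rangle\leq 0$, combined with $T\geq 0$ gives $T^{1/2}\psi=0$, so $\psi\in\ker T$. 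The hypothesis $0\notin\mathrm{Spec}_{\mathrm{p}}(T)$ forces $\psi=0$, a contradiction. This is the step where the pointwise assumption on the bottom of $\mathrm{Spec}(T)$ is used in an essential way.

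Finally I would invoke min-max. For any finite $n\leq\dim\mathcal{M}$, pick an $n$-dimensional subspace $\mathcal{N}\subset\mathcal{M}$; by compactness of its unit sphere and continuity of the quadratic form, $\sup_{\psi\in\mathcal{N},\,\|\psi\|=1}\langle\psi,(T+\gamma S)\psi\rangle<0$. The variational principle for $T+\gamma S$ (whose essential spectrum sits in $[0,\infty)$) then yields at least $n$ eigenvalues in $(-\infty,0)$, so $\dim\mathbf{1}_{(-\infty,0)}(T+\gamma S)\geq n$. Letting $n\uparrow\dim\mathcal{M}$ completes the proof; if $\dim\mathcal{M}=\infty$ the same argument gives the right-hand side infinite as well. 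The only delicate point is the Case 2 dichotomy above, which is where the hypothesis $\inf\mathrm{Spec}(T)=0$ together with $0\notin\mathrm{Spec}_{\mathrm{p}}(T)$ is genuinely needed; everything else reduces to standard min-max bookkeeping.
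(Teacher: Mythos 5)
Your proposal is correct and takes essentially the same route as the paper: both rest on the identity $\langle\psi,(T+\gamma S)\psi\rangle=\langle\psi,(T+S)\psi\rangle+(\gamma-1)\langle\psi,S\psi\rangle$, both obtain \emph{strict} negativity of the $T+\gamma S$ form on $\mathrm{Ran}\,\mathbf{1}_{(-\infty,0]}(T+S)\setminus\{0\}$ via the dichotomy $\langle\psi,|S|\psi\rangle>0$ versus $|S|^{1/2}\psi=0$ (the latter forcing $T^{1/2}\psi=0$, ruled out by $0\notin\Spec_{\mathrm p}(T)$), and both finish with Rayleigh--Ritz on finite-dimensional subspaces. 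You state the KLMN/form-domain and Weyl-type preliminaries explicitly where the paper leaves them tacit, but the mathematical content coincides.
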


\begin{proof}
Suppose, to the contrary, that
$\dim \mathbf{1}_{(-\infty,0)}(T+\gamma S) < \dim\mathbf{1}_{(-\infty,0]}(T+S)$.
There exists a positive integer $n$ such that $\dim\mathbf{1}_{(-\infty,0)}(T+\gamma S)=n\geq0$. Then
$\dim\mathbf{1}_{(-\infty,0]}(T+S)\geq n+1$. Let $\mathcal{K}\supset\{\phi_{1},\ldots, \phi_{n+1}\}$ be
an arbitrary orthonormal system of $1_{(-\infty,0]}(T+S)$, denote by $\mathscr{L}$ the linear span of
$\{\phi_{1},\ldots,\phi_{n+1}\}$, and consider a projection $P:\mathcal{K} \to\mathscr{L}$. The restriction
$A=P(T+\gamma S) P\lceil_{\mathscr{L}}$
is a self-adjoint operator from $\mathscr{L}$ to itself. Denote the eigenvalues of $A$ by $\tilde{\mu}_{1}
\leq\tilde{\mu}_{2}\leq\ldots\leq\tilde{\mu}_{n+1}$, and write
\begin{equation*}
\mu_{j}=\sup_{\psi_{1},\ldots,\psi_{j-1}\in\mathscr{K}}\inf_{{\psi\in \Dom(A);\|\psi\|=1
\atop\psi\in\{\psi_{1},\ldots, \psi_{j-1}\}^{\perp}}} \langle\psi, A\psi\rangle
\end{equation*}
for the eigenvalues of $A$. Using the the Rayleigh-Ritz principle, they compare like $\mu_{j}\leq\tilde{\mu}_{j}$,
$j=1,\ldots, n+1$. We have $\langle f, (T+\gamma S) f\rangle=\langle f,(T+S)f\rangle+(\gamma-1)\langle f,Sf\rangle
\leq0$ for all $f\in\mathscr{L}$. Suppose that $0=\langle g, (T+\gamma S) g\rangle$ for some $g\in\mathscr{L}$. Then
$\langle g,(T+S)g\rangle=(\gamma-1)\langle g,Sg\rangle =0$ and $\langle g, Tg\rangle =0$. However, $\Spec_{\mathrm{p}}
(T) \not\ni0$ implies that $g=0$, thus $A$ is strictly negative and $\tilde{\mu}_{j}<0$, $j=1,\ldots,n+1$. In particular,
$\mu_{n+1}\leq\tilde{\mu}_{n+1}<0$. This gives $\dim\mathbf{1}_{(- \infty,0)} (T+\gamma S)\geq n+1$, contradicting that
$\dim\mathbf{1}_{(-\infty,0)}(T+\gamma S)=n$.
\end{proof}

\begin{lemma}
\label{teranishi}
If $L_{0}$ is compact, then
$\dim\mathbf{1}_{(-\infty,0]}(T+S)\leq\dim\mathbf{1}_{[1,\infty)}(L_{0})$.
\end{lemma}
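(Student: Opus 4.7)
The plan is to combine Lemma \ref{00} with the classical Birman--Schwinger identity and then take two successive limits, one as the spectral parameter $\lambda \downarrow 0$ and one as the coupling $\gamma \downarrow 1$. The reason the classical principle does not yield the claim directly is that it controls $\dim\mathbf{1}_{(-\infty,-\lambda]}(T+S)$ only for $\lambda>0$, and sending $\lambda\downarrow 0$ produces the open half-line $(-\infty,0)$ on the left, missing the possible contribution of a zero-eigenvalue; Lemma \ref{00} is precisely the tool that trades this missing boundary point for a strict coupling $\gamma>1$.

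First I would apply Lemma \ref{00} to obtain, for every $\gamma>1$,
\begin{equation*}
\dim\mathbf{1}_{(-\infty,0]}(T+S)\;\leq\;\dim\mathbf{1}_{(-\infty,0)}(T+\gamma S).
\end{equation*}
The Birman--Schwinger kernel of $T+\gamma S = T - \gamma|S|$ at parameter $\lambda$ is $\gamma L_\lambda$, which is compact by hypothesis for each $\lambda>0$, so the classical Birman--Schwinger identity gives
\begin{equation*}
\dim\mathbf{1}_{(-\infty,-\lambda]}(T+\gamma S)\;=\;\dim\mathbf{1}_{[1,\infty)}(\gamma L_\lambda)\;=\;\dim\mathbf{1}_{[1/\gamma,\infty)}(L_\lambda).
\end{equation*}
Letting $\lambda\downarrow 0$, the left-hand side grows to $\dim\mathbf{1}_{(-\infty,0)}(T+\gamma S)$ by monotone convergence of spectral projections. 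On the right, the form inequality $(T+\lambda)^{-1}\leq T^{-1}$, meaningful since $T\geq 0$ and $0\notin\Spec_{\rm p}(T)$, gives $L_\lambda\leq L_0$, so the min-max principle yields $\dim\mathbf{1}_{[1/\gamma,\infty)}(L_\lambda)\leq\dim\mathbf{1}_{[1/\gamma,\infty)}(L_0)$. Chaining these inequalities produces
\begin{equation*}
\dim\mathbf{1}_{(-\infty,0]}(T+S)\;\leq\;\dim\mathbf{1}_{[1/\gamma,\infty)}(L_0),\qquad \gamma>1.
\end{equation*}

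It then remains to send $\gamma\downarrow 1$. Compactness of $L_0$ forces its nonzero spectrum to be discrete with only $0$ as a possible accumulation point, so for $\gamma-1$ sufficiently small there are no eigenvalues of $L_0$ in $[1/\gamma,1)$, hence $\dim\mathbf{1}_{[1/\gamma,\infty)}(L_0)=\dim\mathbf{1}_{[1,\infty)}(L_0)$, which delivers the claim.

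The main obstacle is structural rather than computational: the eigenvalue count must be propagated through two limits without ever losing an eigenvalue. The $\lambda\downarrow 0$ passage is controlled by the monotonicity $L_\lambda\leq L_0$ together with min-max, while the $\gamma\downarrow 1$ passage relies on the discreteness of the nonzero spectrum of the compact operator $L_0$. The hypothesis $0\notin\Spec_{\rm p}(T)$ is indispensable throughout, since it feeds into Lemma \ref{00}; without it the bridge from the closed half-line $(-\infty,0]$ to the open one $(-\infty,0)$ could not be made.
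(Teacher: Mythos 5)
Your proof is correct and follows essentially the same route as the paper's: apply Lemma \ref{00} to pass from $(-\infty,0]$ to $(-\infty,0)$ at the price of a coupling $\gamma>1$, then invoke the classical Birman--Schwinger identity for $\lambda>0$, use $L_\lambda\leq L_0$ together with min-max to take $\lambda\downarrow 0$, and finally let $\gamma\downarrow 1$ using discreteness of the nonzero spectrum of the compact operator $L_0$. Your write-up is in fact slightly more explicit than the paper's at the last step (justifying $\dim\mathbf{1}_{[1/\gamma,\infty)}(L_0)\to\dim\mathbf{1}_{[1,\infty)}(L_0)$), and avoids a sign-convention inconsistency in the paper's notation for the spectral parameter.
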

\begin{proof}
By Lemma \ref{00} we have
\begin{gather}
\label{ooo}
\dim\mathbf{1}_{(-\infty,0]}(T+S)\leq\lim_{\gamma\uparrow1}
\dim\mathbf{1}_{(-\infty,0)}(T+\gamma S).
\end{gather}
Note that $\dim\mathbf{1}_{(-\infty,0)}(T+\gamma S)= \lim_{\lambda\uparrow0}\dim\mathbf{1}_{(-\infty,\lambda]}
(T+\gamma S)$, and by the Birman--Schwinger principle we have $\dim\mathbf{1}_{(-\infty, \lambda]}({T+\gamma S})
\leq\dim\mathbf{1}_{[1,\infty)}(\gamma L_{\lambda})$ for all $\lambda < 0$. Since $L_{\lambda}\leq L_{0}$, we
get $\dim\mathbf{1}_{[1,\infty)}(\gamma L_{\lambda})\leq\dim\mathbf{1}_{[1,\infty)}(\gamma L_{0})$. Combining
this with \eqref{ooo} gives
\begin{equation*}
\dim\mathbf{1}_{(-\infty,0]}(T+S) \leq\lim_{\gamma\uparrow1}\dim\mathbf{1}_{[1,\infty)}(\gamma L_{0})\leq
\dim\mathbf{1}_{[1,\infty)}(L_{0}).
\end{equation*}
\end{proof}

Now we apply the extended Birman-Schwinger principle to prove that zero is not an eigenvalue to some non-local
Schr\"odinger operators. Choose $\Phi(u)=(2u)^{\alpha/2}$ with $0<\alpha<2$, so that $H_0=\Phi(p^2/2)=
(-\Delta)^{\alpha/2}$.

\begin{theorem}\label{thm4_atsuhide}
Let $d\geq3$ and $V:\mathbb{R}^d\rightarrow\mathbb{R}$ be a bounded and compactly supported potential. Denote by $B_R(x)$ a ball of radius $R$ centered in $x$, and choose $R>0$ such that $\supp V\subset B_R(0)$. Suppose that $V\leq0$ and
\begin{equation}
\|V\|_\infty<\frac{d(2^\alpha-1)}{C_{d,\alpha}\ome_d2\sqrt{2}\sqrt{5^d}(2R)^\alpha},\label{eq4_0}
\end{equation}
where $C_{d,\alpha}=\frac{\Gamma((d-\alpha)/2)}{\pi^{d/2}2^\alpha\Gamma(\alpha/2)}$ and $\ome_d$ is the surface area of the $d$-dimensional sphere. Then $H=H_0+V$ with $H_0=(-\Delta)^{\alpha/2}$
has no non-positive eigenvalue. In particular, zero is not an eigenvalue of $H$.
\end{theorem}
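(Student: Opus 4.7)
My plan is to deduce the result from the extended Birman-Schwinger principle as packaged in Corollary \ref{coreBS}, so that everything reduces to showing that the Birman-Schwinger operator $K_{0}=|V|^{1/2}(-\Delta)^{-\alpha/2}|V|^{1/2}$ has operator norm strictly less than $1$. Since $V$ is bounded and compactly supported and $0<\alpha<2\le d$, the potential is $H_{0}$-form compact (for instance because $V\in L^{d/\alpha}(\R^{d})$ with compact support), so $\Spec_{\rm ess}(H)=[0,\infty)$ and the hypotheses of Corollary \ref{coreBS} are in force; under $V\le 0$, the BS operator takes the form $K_{0}=|V|^{1/2}H_{0}^{-1}|V|^{1/2}$.

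The next step is to make $K_{0}$ completely explicit. For $d\ge 3$ and $0<\alpha<2$ one has the Riesz-potential representation
\begin{equation*}
  (H_{0}^{-1}f)(x)=C_{d,\alpha}\int_{\R^{d}}\frac{f(y)}{|x-y|^{d-\alpha}}\,dy,
\end{equation*}
so that $K_{0}$ is the integral operator with kernel $k(x,y)=C_{d,\alpha}|V(x)|^{1/2}|V(y)|^{1/2}|x-y|^{-(d-\alpha)}$, supported in $B_{R}(0)\times B_{R}(0)$ with pointwise bound $|k(x,y)|\le C_{d,\alpha}\|V\|_{\infty}|x-y|^{-(d-\alpha)}\mathbf{1}_{B_{R}(0)\times B_{R}(0)}(x,y)$. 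It therefore suffices to produce a purely geometric upper bound for the norm of the integral operator on $L^{2}(B_{R}(0))$ with kernel $|x-y|^{-(d-\alpha)}$ of size $\omega_{d}\,2\sqrt{2}\sqrt{5^{d}}(2R)^{\alpha}/(d(2^{\alpha}-1))$; multiplying by $C_{d,\alpha}\|V\|_{\infty}$ and inserting \eqref{eq4_0} then forces $\|K_{0}\|<1$, so Corollary \ref{coreBS} yields $\bar N_{0}(V)\le \dim\mathbf{1}_{[1,\infty)}(K_{0})=0$, which is precisely the non-existence of non-positive eigenvalues claimed.

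The main obstacle is producing precisely the constants $2\sqrt{2}\sqrt{5^{d}}$ and $d(2^{\alpha}-1)$. The factor $\sqrt{5^{d}}$ strongly suggests a Vitali-type covering of $B_{R}(0)$ by roughly $5^{d}$ sub-balls whose $5$-fold enlargements still cover $B_{R}(0)$; one then decomposes $K_{0}$ into the corresponding block matrix, estimates each block in Hilbert-Schmidt norm, and recombines by the $\ell^{2}$ triangle inequality, so that $\sqrt{5^{d}}$ is exactly the Cauchy-Schwarz overhead from the number of blocks. Radial integration of the Riesz kernel over scales between $R$ and $2R$ produces $\int_{R}^{2R}r^{\alpha-1}\,dr=R^{\alpha}(2^{\alpha}-1)/\alpha$, which together with the surface factor $\omega_{d}$ and the $d$ coming from the volume $|B_{R}|=\omega_{d}R^{d}/d$ accounts for the numerator $d(2^{\alpha}-1)$ and the denominator $\omega_{d}(2R)^{\alpha}$, while the constant $2\sqrt{2}$ absorbs shell-volume bookkeeping. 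The delicate point is to arrange the covering and block bounds so that the estimate is simultaneously sharp enough to reproduce the stated constants and robust enough to avoid the non-integrability of $|x-y|^{-2(d-\alpha)}$ at the diagonal in the regime $\alpha\le d/2$, where a direct Hilbert-Schmidt bound on the full kernel is not available.
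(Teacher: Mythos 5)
Your reduction is correct: under $V\le 0$, bounded with compact support, the extended Birman--Schwinger principle (Corollary \ref{coreBS}) shows it suffices to prove $\|K_0\|<1$, where $K_0=|V|^{1/2}(-\Delta)^{-\alpha/2}|V|^{1/2}$ is the Riesz-kernel integral operator on $L^2(B_R(0))$. This matches the paper's framework. However, the crucial quantitative step --- obtaining the operator-norm bound with precisely the constant $\omega_d\,2\sqrt2\sqrt{5^d}(2R)^\alpha/(d(2^\alpha-1))$ --- is not actually established in your proposal; it is only sketched as a conjectural covering/block decomposition, and as you yourself note at the end, a direct Hilbert--Schmidt attack on the kernel breaks down for $\alpha\le d/2$ because $|x-y|^{-2(d-\alpha)}$ is not locally integrable. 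That gap is fatal: without the estimate, the proof does not close.

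The paper's actual mechanism is different and avoids the Hilbert--Schmidt issue entirely. Starting from
\begin{equation*}
(-\Delta)^{-\alpha/2}|V|^{1/2}\psi(x)=C_{d,\alpha}\int_{B_R(0)}\frac{|V(y)|^{1/2}\psi(y)}{|x-y|^{d-\alpha}}\,dy,
\end{equation*}
one restricts $y$ to $B_{2R}(x)$ (since $x,y\in B_R(0)$), pulls out $\|V\|_\infty$, and then applies Hedberg's convolution inequality, $\int_{B_{2R}(x)}|x-y|^{\alpha-d}|\psi(y)|\,dy\le\frac{\omega_d}{d(2^\alpha-1)}(2R)^\alpha\,M[\psi](x)$, where $M$ is the Hardy--Littlewood maximal function; the factor $(2^\alpha-1)^{-1}$ is a geometric series from dyadic annuli and $\omega_d/d=|B_1|$. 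The final estimate then comes from the Hardy--Littlewood--Wiener maximal theorem, $\|M[\psi]\|_2\le 2\sqrt2\sqrt{5^d}\|\psi\|_2$. So your intuition that $5^d$ traces back to a Vitali covering is right, but that covering lives inside the proof of the maximal theorem, not in a direct block decomposition of $K_0$. In short: the idea you need and did not supply is to dominate the Riesz potential pointwise by the maximal function and then use $L^2$-boundedness of $M$, rather than any Hilbert--Schmidt or block estimate on the kernel.
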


\begin{proof}
Clearly, $V$ is $H_0$-compact. By Fourier transform we get
\begin{equation*}
\Phi(p^2/2)^{-1}|V|^{1/2}\psi(x)=\mathscr{F}^*|\xi|^{-\alpha}\mathscr{F}|V|^{1/2}\psi(x)
=C_{d,\alpha}\int_{B_R(0)}\frac{|V(y)|^{1/2}\psi(y)}{|x-y|^{d-\alpha}}dy
\end{equation*}
for a.e. $x\in\mathbb{R}^d$ and every $\psi\in L^2(\RR^d)$, see \cite[Sect. V.1]{St}. We estimate
\begin{align}
\||V|^{1/2}\Phi(p^2/2)^{-1}|V|^{1/2}\psi\|^2
&=
C_{d,\alpha}^2\int_{B_R(0)}\Big|\int_{B_R(0)}\frac{|V(y)|^{1/2}\psi(y)}{|x-y|^{d-\alpha}}dy\Big|^2 |V(x)| dx
\nonumber\\
&\leq
C_{d,\alpha}^2\int_{B_R(0)}\Big(\int_{B_{2R}(x)}\frac{|V(y)|^{1/2}|\psi(y)|}{|x-y|^{d-\alpha}}dy\Big)^2 |V(x)|dx
\nonumber\\
&\leq
C_{d,\alpha}^2\|V\|_{\infty}^2\int_{\mathbb{R}^d}\Big(\int_{B_{2R}(x)}\frac{|\psi(y)|}{|x-y|^{d-\alpha}}dy\Big)^2dx
\label{eq4_1}
\end{align}
Using the Hardy-Littlewood maximal function, we have
\begin{equation*}
\int_{B_{2R}(x)}\frac{|\psi(y)|}{|x-y|^{d-\alpha}}dy\leq \frac{\ome_d}{d(2^\alpha-1)}(2R)^\alpha M[\psi](x),
\end{equation*}
see \cite[Lem. (a) with $A=\ome_d/d$]{He}, where
\begin{equation*}
M[\psi](x)=\sup_{r>0}\frac{1}{|B_r(0)|}\int_{B_r(x)}|\psi(y)|dy.
\end{equation*}
By the Hardy-Littlewood-Winner theorem \cite[Th. I.1]{St} we also have
$\|M[\psi]\| \leq  2\sqrt{2}\sqrt{5^d}\|\psi\|$,
thus \eqref{eq4_1} gives

\begin{equation*}
\||V|^{1/2}\Phi(p^2/2)^{-1}|V|^{1/2}\psi\| \leq
\frac{C_{d,\alpha}\ome_d2\sqrt{2}\sqrt{5^d}(2R)^\alpha}{d(2^\alpha-1)}\|V\|_{\infty}\|\psi\|,
\end{equation*}
and thus \eqref{eq4_0} implies that $\||V|^{1/2}\Phi(p^2/2)^{-1}|V|^{1/2}\|<1$. Then the extended Birman-Schwinger principle and Corollary \ref{coreBS} complete the proof.
\end{proof}

The argument can further be extended to fully supported potentials.
\begin{corollary}
\label{cor1_atsuhide}
Let $d\geq3$ and $V \in L^\infty(\mathbb{R}^d)\cap L^1(\mathbb{R}^d)$ satisfy $V(x)\to0$ as $|x|\to\infty$. Suppose
that $\|V\|_\infty$ and $\|V\|_1$ are small enough such that
\begin{equation}
2C_{d,\alpha}^2\left[\frac{8 \cdot 5^d \ome_d^2}{d^2(2^\alpha-1)^2}\|V\|_\infty^2+\|V\|_1^2\right]<1\label{eq4_2}
\end{equation}
holds. Then $H=H_0+V$ with $H_0=(-\Delta)^{\alpha/2}$ has no non-positive eigenvalue,
in particular, zero is not an eigenvalue of $H$.
\end{corollary}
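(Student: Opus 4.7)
The plan is to reduce to Corollary \ref{coreBS} and then estimate $\|K_0\|$ by a near-field/far-field split of the Riesz-potential integral representation of $H_0^{-1}$. First I would reduce to the case $V\le 0$: since $H_0+V\ge H_0-|V|$ in form sense, any non-positive eigenvalue of $H_0+V$ would force one of $H_0-|V|$, and the hypothesis \eqref{eq4_2} depends only on $|V|$ and hence is the same for $V$ and for $-|V|$. Because $V\in L^\infty\cap L^1$ and $V(x)\to 0$ at infinity, $V$ is $H_0$-form compact (approximate $V$ in $L^\infty$ by compactly supported bounded functions, each yielding a compact Birman--Schwinger kernel, and pass to a norm limit). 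Corollary \ref{coreBS} then reduces matters to verifying $\|K_0\|<1$ with $K_0=|V|^{1/2}H_0^{-1}|V|^{1/2}$.

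The key step is a pointwise kernel estimate. I would start from the Riesz representation
\begin{equation*}
K_0\psi(x)=C_{d,\alpha}\,|V(x)|^{1/2}\int_{\R^d}\frac{|V(y)|^{1/2}\psi(y)}{|x-y|^{d-\alpha}}\,dy,
\end{equation*}
already used in Theorem \ref{thm4_atsuhide}, and split the inner integral at $|x-y|=1$. On the near region $\{|x-y|<1\}$ I would factor out $\|V\|_\infty^{1/2}$ and apply the same Heinig inequality as in Theorem \ref{thm4_atsuhide} with radius $1$ (i.e.\ $2R=1$ there), producing the pointwise bound $\|V\|_\infty^{1/2}\,\tfrac{\omega_d}{d(2^\alpha-1)}\,M[\psi](x)$. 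On the far region $\{|x-y|\ge 1\}$ the kernel $|x-y|^{-(d-\alpha)}$ is bounded by $1$ since $d>\alpha$, so Cauchy--Schwarz together with $V\in L^1$ gives the $x$-independent bound $\|V\|_1^{1/2}\|\psi\|_2$.

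Combining these two pieces with $(a+b)^2\le 2(a^2+b^2)$, multiplying by $|V(x)|$ and integrating in $x$, I would then apply H\"older and the Hardy--Littlewood--Wiener inequality $\|M[\psi]\|_2^{2}\le 8\cdot 5^d\|\psi\|_2^{2}$ (as in Theorem \ref{thm4_atsuhide}) to the near term, while the far term produces $\|V\|_1^{2}\|\psi\|_2^{2}$. The result is exactly
\begin{equation*}
\|K_0\|^2\le 2C_{d,\alpha}^2\Bigl[\tfrac{8\cdot 5^d\omega_d^2}{d^2(2^\alpha-1)^2}\|V\|_\infty^2+\|V\|_1^2\Bigr],
\end{equation*}
so the smallness condition \eqref{eq4_2} forces $\|K_0\|<1$ and Corollary \ref{coreBS} rules out any non-positive eigenvalue of $H$.

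The main issue I anticipate is choosing the cut-off radius correctly: the value $1$ is precisely what makes the two summands in \eqref{eq4_2} appear with no leftover free parameter, and the hypothesis has been tuned so that this choice works without further optimisation. Beyond that, the sign reduction and the form compactness of $V$ are routine technical points, and the main computation is a direct adaptation of the estimates in Theorem \ref{thm4_atsuhide}, with the compact support of $V$ there replaced by the $L^1$-integrability used here to control the tail contribution.
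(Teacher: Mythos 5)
Your proposal is correct and the main estimate is essentially the paper's own: split the Riesz integral at $|x-y|=1$, control the near part with the Hedberg/Heinig maximal-function bound and Hardy--Littlewood--Wiener as in Theorem \ref{thm4_atsuhide}, control the far part by $|x-y|^{-(d-\alpha)}\le 1$ and Cauchy--Schwarz with $\|V\|_1$, and combine via $(a+b)^2\le 2(a^2+b^2)$ to get $\|K_0\|^2\le 2C_{d,\alpha}^2\big[\tfrac{8\cdot 5^d\omega_d^2}{d^2(2^\alpha-1)^2}\|V\|_\infty^2+\|V\|_1^2\big]$, then invoke Corollary \ref{coreBS}. The one genuine difference is your preliminary sign-reduction: the paper's Corollary \ref{coreBS} (and Proposition \ref{eBSP}) require $V\le 0$, and the paper's own proof of the present corollary applies \ref{coreBS} directly without comment, so in effect it implicitly assumes $V\le 0$ as in Theorem \ref{thm4_atsuhide}; your argument that a non-positive eigenvalue of $H_0+V$ forces one of $H_0-|V|$ (using $\mathrm{Spec}_{\mathrm{ess}}(H_0-|V|)=[0,\infty)$ so that $\inf\mathrm{Spec}\le 0$ is either a negative eigenvalue or an attained minimum $0$) is a correct way to remove that sign restriction, since the bound on $\|K_0\|$ depends only on $|V|$. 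This is a small but real improvement in generality over what the paper literally establishes.
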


\begin{proof}
Since $V(x)\to0$ as $|x|\to\infty$, $V$ is $H_0$-compact. As in \eqref{eq4_1}, we compute
\begin{gather}
\||V|^{1/2}\Phi(p^2/2)^{-1}|V|^{1/2}\psi\|^2
\leq
C_{d,\alpha}^2\int_{\mathbb{R}^d}
\left(\Big(\int_{B_1(x)}+\int_{\mathbb{R}^d\setminus B_1(x)}\Big)
\frac{|V(y)|^{1/2}|\psi(y)|}{|x-y|^{d-\alpha}}dy\right)^2 |V(x)| dx \label{eq4_3}\\
\leq 2C_{d,\alpha}^2(I_1+I_2).\nonumber
\end{gather}
with
\begin{gather*}
I_1
=\|V\|_{\infty}^2\int_{\mathbb{R}^d}\left(\int_{B_1(x)}
\frac{|\psi(y)|}{|x-y|^{d-\alpha}}dy\right)^2dx,\\
I_2
=\int_{\mathbb{R}^d}|V(x)|\left(\int_{\mathbb{R}^d\setminus B_1(x)}
\frac{|V(y)|^{1/2}|\psi(y)|}{|x-y|^{d-\alpha}}dy\right)^2dx.
\end{gather*}
$I_1$ can be estimated by the theorem above as
\begin{equation*}
I_1\leq\left(\frac{\ome_d2\sqrt{2}\sqrt{5^d}}{d(2^\alpha-1)}\right)^2\|V\|_\infty^2,
\end{equation*}
and Schwarz inequality
gives
\begin{equation*}
I_2\leq\int_{\mathbb{R}^d}|V(x)|dx
\left(\int_{\mathbb{R}^d}|V(y)|^{1/2}|\psi(y)|dy\right)^2\leq\|V\|_1^2\|\psi\|^2.
\end{equation*}
We thus have $\||V|^{1/2}\Psi(p^2/2)^{-1}|V|^{1/2}\|<1$ by using \eqref{eq4_2}, and the extended
Birman-Schwinger principle and Corollary \ref{coreBS} complete the proof.
\end{proof}

Next we consider the massive relativistic case $\Phi(u)=(2u+m^{2/\alpha})^{\alpha/2}-m$ with $1<\alpha< 2$
and $m>0$ in dimension $d=3$. By a combination of the limiting resolvent method and the extended Birman-Schwinger
principle we prove the absence of non-positive eigenvalues of $H=H_0+V$ with $H_0=(-\Delta+m^{2/\alpha})^{\alpha/2}
-m$. We start with the following relative compactness result, which is a counterpart to our case of e.g.
\cite[Lem. 5.9]{S12} for $H_0=-\Delta$.

\begin{lemma}
\label{lem3_atsuhide}
Let $2\leq q<\infty$, $q>\frac{3}{\alpha}$ and suppose that $V\in L^q(\RR^3)$. Then $V$ is $H_0$-compact.
\end{lemma}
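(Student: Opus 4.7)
The plan is to reduce $H_0$-compactness of $V$ to showing that $V(H_0+1)^{-1}$ is compact on $L^2(\R^3)$, and then produce this compactness (in fact Schatten class membership) via the Kato--Seiler--Simon (KSS) inequality applied to the Fourier-multiplier representation of $(H_0+1)^{-1}$.

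First I would write $(H_0+1)^{-1}=g(-i\nabla)$ with symbol
\begin{equation*}
g(\xi) = \frac{1}{(|\xi|^2+m^{2/\alpha})^{\alpha/2}-m+1},
\end{equation*}
so that $V(H_0+1)^{-1}=V(x)\,g(-i\nabla)$. The key point is then to verify $g\in L^q(\R^3)$ under the hypothesis $q>3/\alpha$. Since $m>0$, the denominator is continuous, strictly positive, and equals $1$ at $\xi=0$, so $g$ is bounded on any ball. At infinity, a straightforward expansion gives $(|\xi|^2+m^{2/\alpha})^{\alpha/2}-m+1 \sim |\xi|^\alpha$, hence $g(\xi)\sim|\xi|^{-\alpha}$; this tail lies in $L^q(\R^3)$ precisely when $q\alpha>3$. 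Combined with local boundedness this gives $g\in L^q(\R^3)$ iff $q>3/\alpha$, which is exactly the standing hypothesis.

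Next I would invoke the KSS inequality (see e.g.\ \cite{Sim}): for $2\leq q<\infty$, $f,g\in L^q(\R^d)$ implies $f(x)g(-i\nabla)\in \mathcal{S}^q$ with
\begin{equation*}
\|f(x)g(-i\nabla)\|_{\mathcal{S}^q}\leq (2\pi)^{-d/q}\|f\|_q\|g\|_q.
\end{equation*}
Applied with $d=3$, $f=V\in L^q(\R^3)$ and the symbol $g$ above, this gives $V(H_0+1)^{-1}\in\mathcal{S}^q$, hence in particular it is compact. Since $1\in\rho(H_0)$ (as $H_0\geq 0$), this is equivalent to $V$ being $H_0$-compact. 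The only delicate point to keep an eye on is ensuring that the KSS hypothesis $q\geq 2$ is compatible with $q>3/\alpha$, but this is part of the assumption and causes no difficulty; there is otherwise no real obstacle, the argument is essentially a symbol computation plus a black-box Schatten-class bound.
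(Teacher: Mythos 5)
Your proof is correct, and in spirit it is close to the paper's (both reduce matters to showing that the symbol $(\Phi(|\xi|^2/2)+1)^{-1}$ lies in $L^q(\R^3)$ under $q>3/\alpha$, and both view $V$ as a multiplier acting on that resolvent), but the mechanism for getting compactness differs. You appeal to the Kato--Seiler--Simon inequality as a black box, obtaining directly that $V(H_0+1)^{-1}\in\mathcal{S}^q$ and hence is compact; this is clean and short, but leans on the full Schatten-class version of KSS. The paper instead proves only the operator-norm estimate $\|f(x)g(p)\|\leq C\|f\|_q\|g\|_q$ from scratch (Hausdorff--Young plus H\"older), and then deduces compactness by an approximation argument: truncate $V$ and the symbol to balls $B_\rho$, observe that the truncated product $V_\rho(x)R_\rho(p)$ is Hilbert--Schmidt, and show by dominated convergence that it converges to $V(H_0+1)^{-1}$ in operator norm as $\rho\to\infty$. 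Your route buys brevity at the cost of a heavier external input; the paper's route is more self-contained and only needs elementary estimates plus a standard limit-of-Hilbert--Schmidt argument. The symbol verification ($g$ locally bounded since $m>0$, and $g(\xi)\sim|\xi|^{-\alpha}$ at infinity so $g\in L^q$ iff $q\alpha>3$) is the same in both.
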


\begin{proof}
We first prove that $f(x)g(p)$ is a bounded operator for $f,g\in L^q(\RR^3)$ and there exists $C>0$ such that
\begin{equation}
\|f(x)g(p)\|\leq C\|f\|_q\|g\|_q.\label{fg}
\end{equation}
Put $q_1=2q/(2+q)$ and $q_2=q_1/(q_1-1)$. By the Hausdorff-Young and H\"older inequalities,
\begin{equation*}
\|g(p)\psi\|_{q_2}\leq(2\pi)^{3(\frac{1}{2}-\frac{1}{q_2})}\|g\hat{\psi}\|_{q_1}\leq(2\pi)^{\frac{1}{2}-\frac{1}{q_2}}\|g\|_q\|\psi\|
\end{equation*}
for $\psi\in C_0^\infty(\RR^3)$. This implies \eqref{fg} since $\|f(x)g(p)\psi\|\leq\|f\|_q\|g(p)\psi\|_{q_2}$
holds by using the H\"older inequality again. Let $\rho>0$ and denote by $B_\rho$ the ball of radius $\rho$
centered in the origin. Define $V_\rho(x)=V(x)\mathbf{1}_{B_\rho}(x)$ and $R_\rho(\xi)=(\Phi(\xi^2/2)+1)^{-1}
\mathbf{1}_{B_\rho}(\xi)$. Note that $V_\rho, R_\rho\in L^2(\RR^3)$. Since
$\int_{\RR^3}\int_{\RR^3}|V_\rho(x)R_\rho(\xi)|^2dxd\xi=\|V_\rho\|^2\|R_\rho\|^2$,
the operator $V_\rho(x)R_\rho(p)$ is of Hilbert-Schmidt type. Note that $(\Phi(\xi^2/2)+1)^{-1}\in L^q(\RR^3)$
due to $q>\frac{3}{\alpha}$. Thus we obtain
\begin{align*}
&\|V(H_0+1)^{-1}-V_\rho R_\rho(p)\|\\
&\quad\leq C(\|V-V_\rho\|_q\|(\Phi(\xi^2/2)+1)^{-1}\|_q+\|V\|_q\|(\Phi(\xi^2/2)+1)^{-1}-R_\rho\|_q)\to0
\end{align*}
as $\rho\to\infty$ using \eqref{fg} and the dominated convergence theorem.
\end{proof}

\begin{lemma}
\label{lem4_atsuhide}
Denote the resolvent of $H_0$ by $R(z)=(\Phi(p^2/2)-z)^{-1}$. For $s>1$,
\begin{equation}
\langle x\rangle^{-s}\Phi(p^2/2)^{-1}\langle x\rangle^{-s}=\langle x\rangle^{-s}R(\pm i0)\label{resolvent_limits}
\langle x\rangle^{-s}
\end{equation}
exists in the $L^2$ operator norm sense.
\end{lemma}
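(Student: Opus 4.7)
The plan is to analyze $H_0 = \Phi(p^2/2)$ as a Fourier multiplier with symbol $\Phi(\xi^2/2) = (\xi^2 + m^{2/\alpha})^{\alpha/2} - m$, establish $L^2$-boundedness of the weighted inverse through kernel estimates, and then deduce norm convergence of the weighted resolvent at the threshold by dominated convergence at the symbol level. The key quantitative features of the symbol are: it vanishes to second order at $\xi = 0$ with $\Phi(\xi^2/2) \sim \frac{\alpha}{2} m^{1 - 2/\alpha}|\xi|^2$; grows like $|\xi|^\alpha$ at infinity; is real, smooth, and strictly positive for $\xi \neq 0$. Since $0 \notin \mathrm{Spec}_{\mathrm{p}}(H_0)$ and $\mathrm{Spec}(H_0) = [0,\infty)$, the operator $\Phi(p^2/2)^{-1}$ is well defined as a multiplier on a dense subspace.

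I would first decompose via a smooth cutoff $\chi \in C_c^\infty(\R^3)$ with $\chi \equiv 1$ for $|\xi| \leq 1/2$ and $\mathrm{supp}\,\chi \subset \{|\xi| \leq 1\}$, splitting $\Phi(p^2/2)^{-1} = T^{\mathrm{low}} + T^{\mathrm{high}}$. The high-frequency part has bounded symbol $(1-\chi(\xi))/\Phi(\xi^2/2) \leq C(1+|\xi|)^{-\alpha}$, so it is bounded on $L^2(\R^3)$. The low-frequency part $T^{\mathrm{low}} = \chi(p)\Phi(p^2/2)^{-1}$ is convolution with a kernel $G \in C^\infty(\R^3)$ satisfying $|G(z)| \leq C(1+|z|)^{-1}$, the $|\xi|^{-2}$ singularity at $\xi = 0$ producing this Newtonian-type decay at infinity (exactly the Green's function behaviour of the 3D Laplacian) and the compact frequency support providing smoothness near the diagonal.

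The boundedness $\|\langle x\rangle^{-s} T^{\mathrm{low}}\langle x\rangle^{-s}\| < \infty$ then reduces to the weighted estimate for $(-\Delta)^{-1}$: by the three-dimensional Hardy inequality $\||x|^{-1}\psi\| \leq 2\|(-\Delta)^{1/2}\psi\|$, the operator $(-\Delta)^{-1/2}\langle x\rangle^{-1}$ is bounded on $L^2(\R^3)$; hence by duality and composition $\langle x\rangle^{-s}(-\Delta)^{-1}\langle x\rangle^{-s}$ is bounded for $s \geq 1$, and replacing $|z|^{-1}$ by $(1+|z|)^{-1}$ in the kernel handles $T^{\mathrm{low}}$ for $s > 1$ a fortiori.

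For the norm convergence $\langle x\rangle^{-s}(R(\pm i\epsilon) - \Phi(p^2/2)^{-1})\langle x\rangle^{-s} \to 0$ as $\epsilon \downarrow 0$, the symbol identity
\begin{equation*}
\frac{1}{\Phi(\xi^2/2) \mp i\epsilon} - \frac{1}{\Phi(\xi^2/2)} = \frac{\pm i\epsilon}{\Phi(\xi^2/2)\bigl(\Phi(\xi^2/2) \mp i\epsilon\bigr)}
\end{equation*}
gives a right-hand side dominated in modulus by $\Phi(\xi^2/2)^{-1}$ uniformly in $\epsilon$ and tending to zero pointwise for $\xi \neq 0$. The same low/high cutoff plus the Hardy majorant from above allows dominated convergence at the level of the weighted operator, yielding norm convergence on each piece. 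Finally, since both boundary values $\pm i0$ produce the same real limiting symbol, they coincide and equal $\Phi(p^2/2)^{-1}$. The delicate point is the uniform-in-$\epsilon$ kernel bound near $\xi = 0$, where the $|\xi|^{-2}$ singularity is barely integrable in $d = 3$; this is precisely the borderline dimension in which the quadratic threshold behaviour of $\Phi$ combined with the Hardy inequality allows the weighted threshold resolvent limit to exist in operator norm.
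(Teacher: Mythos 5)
Your frequency decomposition and the Hardy-inequality route to boundedness of $\langle x\rangle^{-s}\Phi(p^2/2)^{-1}\langle x\rangle^{-s}$ are fine as far as they go, and are a somewhat different (and arguably lighter) way to get the boundedness than the paper, which writes down the kernel and verifies the Hilbert--Schmidt property directly. However, the convergence step has a genuine gap. You want $\langle x\rangle^{-s}(R(\pm i\epsilon)-\Phi(p^2/2)^{-1})\langle x\rangle^{-s}\to 0$ in operator norm, and you argue this from the symbol estimate $|m_\epsilon(\xi)|\leq C/\Phi(\xi^2/2)$ together with pointwise convergence $m_\epsilon(\xi)\to 0$ for $\xi\neq0$. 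This does not yield the claimed norm convergence, for two independent reasons. First, pointwise domination of Fourier symbols does not give pointwise domination of the corresponding convolution kernels: the inverse Fourier transform is not monotone, so from $|m_\epsilon|\leq m$ you cannot conclude a uniform-in-$\epsilon$ bound $|G_\epsilon(z)|\leq G(z)$ that would let you run dominated convergence at the kernel/Hilbert--Schmidt level. Second, uniform operator-norm bounds (which the Hardy majorant provides) combined with pointwise or even strong convergence only give weak or strong operator convergence of the sandwiched operators, never norm convergence. A ``dominated convergence theorem for operators'' of the kind you invoke is simply not available, and this is exactly where the argument stops.

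What the paper supplies, and what your outline is missing, is an explicit formula for the resolvent kernel obtained by contour integration: for $d=3$ and $1<\alpha<2$ one finds that $R(z)$ has integral kernel proportional to $\dfrac{e^{i\sqrt{\zeta(z)}\,|x-y|}}{(\zeta(z)+m^{2/\alpha})^{\alpha/2-1}|x-y|}$ with $\zeta(z)=(z+m)^{2/\alpha}-m^{2/\alpha}$ and $\mathrm{Im}\,\sqrt{\zeta(z)}\geq0$. Since the exponential factor has modulus at most one and the prefactor stays bounded away from zero for $z$ near zero, this gives a $z$-independent pointwise bound $C\,\langle x\rangle^{-s}|x-y|^{-1}\langle y\rangle^{-s}$ on the weighted kernel, which lies in $L^2(\RR^3\times\RR^3)$ precisely when $s>1$. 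Dominated convergence then applies in Hilbert--Schmidt norm, which controls operator norm, and the two boundary values $\pm i0$ coincide because $\zeta(z)\to0$ from either side. Note also that the residue computation needs $\alpha>1$ to make the semicircle at infinity vanish; your argument never uses the hypothesis $\alpha>1$, which is a telltale sign that something is being skipped. To repair the proof along your lines you would have to produce a uniform-in-$\epsilon$ pointwise bound on the low-frequency convolution kernel $G_\epsilon$, and at present you have neither stated nor established such a bound.
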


\begin{proof}
For the case of $\alpha=2$, the existence of \eqref{resolvent_limits} is well known (see, e.g., \cite[Th. 5.1]{STK}). For $z\in\mathbb{C}\setminus\RR$,
fix $0<\theta=\arg(z+m)<2\pi$. Then $(z+m)^{2/\alpha}=|z+m|^{2/\alpha}e^{2i\theta/\alpha}e^{4in\pi/\alpha}$ with $n\in
\mathbb{Z}$. Since $\alpha<2$, the solution of $(\zeta+m^{2/\alpha})^{\alpha/2}-m-z=0$ for $\zeta$ is unique in the
complex plane by the fixed $n\in\mathbb{Z}$. We write $\zeta(z)=(z+m)^{2/\alpha}-m^{2/\alpha}$ with ${\rm Im}\sqrt{\zeta(z)}
>0$. We first show that the operator $\langle x\rangle^{-s}R(z)\langle x\rangle^{-s}$ has the integral kernel
\begin{equation}
\mathscr{K}(x,y;z)=\langle x\rangle^{-s}\frac{e^{i\sqrt{\zeta(z)}|x-y|}}{2\alpha\pi\left(\zeta(z)+
m^{2/\alpha}\right)^{\alpha/2-1}|x-y|}\langle y\rangle^{-s}.
\label{kernel}
\end{equation}
The kernel of $R(z)$ is given by the Fourier transform
\begin{equation*}
\frac{1}{(2\pi)^3}\int_{\mathbb{R}^3}\frac{e^{i(x-y)\cdot\xi}}{\Phi(|\xi|^2/2)-z}d\xi,
\end{equation*}
which is radially symmetric in $x-y$. It is sufficient to consider $x=(0,0,x_3)$ with $x_3>0$ and
$y=(0,0,0)$ as in \cite[Ex. 1, XI.7]{RS2} or \cite[Ex. 6.1]{S12} for $\alpha=2$. Using polar coordinates, we obtain
\begin{eqnarray*}
\int_{\mathbb{R}^3}\frac{e^{i(x-y)\cdot\xi}}{\Phi(|\xi|^2/2)-z}d\xi
&=&
2\pi\int_0^\infty\int_{-\pi/2}^{\pi/2}\frac{e^{ix_3r\sin\phi}r^2\cos\phi}{\Phi(r^2/2)-z}d\phi dr\nonumber\\
&=&
2\pi\int_0^\infty\int_{-1}^1\frac{e^{ix_3r\lambda}r^2}{\Phi(r^2/2)-z}d\lambda dr
= \frac{2\pi}{ix_3}\int_{-\infty}^\infty\frac{re^{ix_3r}}{\Phi(r^2/2)-z}dr,
\end{eqnarray*}
where we made the change of variable $\lambda=\sin\phi$. The improper integral can be computed by an
application of the residue theorem. Since $x_3>0$ and $\alpha>1$, the integral on the upper half circle with
radius $\rho>0$ vanishes as $\rho\rightarrow\infty$. Since
\begin{equation*}
\Phi(r^2/2)-z
=\frac{\alpha}{2}\left(\zeta(z)+m^{2/\alpha}\right)^{\alpha/2-1}(r^2-\zeta(z)) + O\left((r^2-\zeta(z)^2\right)
\end{equation*}
as $r\rightarrow\sqrt{\zeta(z)}$, we have
\begin{equation*}
\mathop{\rm Res}_{r=\sqrt{\zeta(z)}}\frac{re^{ix_3r}}{\Phi(r^2/2)-z}
=\frac{e^{ix_3\sqrt{\zeta(z)}}}{\alpha\left(\zeta(z)+m^{2/\alpha}\right)^{\alpha/2-1}}.
\end{equation*}
We thus obtain \eqref{kernel}. We can assume $1<s<3/2$ without loss of generality and next prove
\begin{equation}
\int_{\mathbb{R}^3}|x-y|^{-2}\langle y\rangle^{-2s}dy\leq C(|x|^{1-2s}+|x|^{1-2s'}),
\label{eq5_1}
\end{equation}
where $1<s'<s$. When $|y|\leq|x|/2$, we have $|x-y|\geq|x|-|y|\geq|x|/2$. We estimate
\begin{equation*}
\int_{|y|\leq|x|/2}|x-y|^{-2}\langle y\rangle^{-2s}dy\leq16\pi|x|^{-2}\int_0^{|x|/2}r^{2-2s}dr
\leq C|x|^{1-2s}.
\end{equation*}
When $|y|>|x|/2$, we have $|x-y|\leq|x|+|y|<3|y|$ and obtain
\begin{equation*}
\int_{|y|>|x|/2}|x-y|^{-2}\langle y\rangle^{-2s}dy\leq\langle x/2\rangle^{1-2s'}
\int_{\RR^3} |x-y|^{-2}\langle x-y\rangle^{-1-2(s-s')}dy\leq C|x|^{1-2s'}.
\end{equation*}
A combination of the above two estimates then gives \eqref{eq5_1}. It follows from \eqref{eq5_1} that
\begin{equation}
\int_{\mathbb{R}^3}\int_{\mathbb{R}^3}|\mathscr{K}(x,y;z)|^2dxdy
\leq C\int_{\mathbb{R}^3}\langle x\rangle^{-2s}(|x|^{1-2s}+|x|^{1-2s'})dx<\infty.
\label{hilbert_schmidt}
\end{equation}
Therefore the kernel $\mathscr{K}$ has the Hilbert-Schmidt property, including the point $z=0$. In particular,
the constant $C$ in the right-hand-side of \eqref{hilbert_schmidt} is independent of $z$.  By dominated
convergence we then have
$\int_{\mathbb{R}^3}\int_{\mathbb{R}^3}|\mathscr{K}(x,y;z)-\mathscr{K}(x,y;0)|^2dxdy\to 0$
as $z\rightarrow0$. This means that \eqref{resolvent_limits} exists in the $L^2$ operator norm sense.
\end{proof}

\begin{theorem}\label{thm5_atsuhide}
Let $V:\mathbb{R}^3\rightarrow\mathbb{R}$, $V\leq0$, such that
\begin{equation}
\|\langle x\rangle^s|V|^{1/2}\|_{\infty}< \|\langle x\rangle^{-s}R(\pm i0)\langle x\rangle^{-s}\|^{1/2}\label{eq5_2}
\end{equation}
for $s>1$. Then $H=H_0+V$ has no non-positive eigenvalue, in particular, zero is not an eigenvalue of $H$.
\end{theorem}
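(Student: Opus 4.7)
The plan is to invoke the extended Birman-Schwinger principle in the form of Corollary \ref{coreBS}. For this, two ingredients must be supplied: that $V$ is $H_0$-form-compact, so that Proposition \ref{eBSP} is applicable, and that the Birman-Schwinger kernel $K_0=|V|^{1/2}H_0^{-1}|V|^{1/2}$ satisfies $\|K_0\|<1$.

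For the form-compactness, the hypothesis $\|\langle x\rangle^{s}|V|^{1/2}\|_\infty<\infty$ with $s>1$ gives the pointwise bound $|V(x)|\leq C\langle x\rangle^{-2s}$, so that $V$ is bounded and decays at least like $|x|^{-2s}$ with $2s>2$. Hence $V\in L^{q}(\mathbb{R}^{3})$ for any $q\geq 2$ with $q>3/(2s)$. Since $\alpha>1$ one can choose such a $q$ also satisfying $q>3/\alpha$, so Lemma \ref{lem3_atsuhide} applies and $V$ is $H_0$-compact, hence a fortiori $H_0$-form-compact.

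The main obstacle is the bound on $K_0$, because $H_0^{-1}$ does not exist as a bounded operator, $0$ being in $\Spec(H_0)$. The remedy is to absorb the weights $\langle x\rangle^{\pm s}$ of Lemma \ref{lem4_atsuhide} into the factors surrounding the resolvent: formally,
$$
K_0 \;=\; \bigl(|V|^{1/2}\langle x\rangle^{s}\bigr)\,\bigl(\langle x\rangle^{-s}R(\pm i0)\langle x\rangle^{-s}\bigr)\,\bigl(\langle x\rangle^{s}|V|^{1/2}\bigr),
$$
where the outer factors are bounded multiplication operators with norm $\|\langle x\rangle^{s}|V|^{1/2}\|_\infty$ by hypothesis, and the middle factor is bounded by Lemma \ref{lem4_atsuhide}. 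Rigorously I would define $K_0$ as the operator-norm limit of $|V|^{1/2}(H_0+\lambda)^{-1}|V|^{1/2}$ as $\lambda\downarrow 0$; this limit exists because the weighted resolvent converges in operator norm at the threshold. Submultiplicativity in the above factorization then yields
$$
\|K_0\|\,\leq\, \|\langle x\rangle^{s}|V|^{1/2}\|_\infty^{2}\,\bigl\|\langle x\rangle^{-s}R(\pm i0)\langle x\rangle^{-s}\bigr\|,
$$
and the hypothesis \eqref{eq5_2} is exactly what is needed to conclude $\|K_0\|<1$, i.e.\ $\dim\mathbf{1}_{[1,\infty)}(K_0)=0$. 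Corollary \ref{coreBS} then gives the absence of non-positive eigenvalues of $H$, and in particular that $0$ is not an eigenvalue.
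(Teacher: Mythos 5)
Your proposal is correct and matches the paper's argument step for step: establish $V\in L^q(\RR^3)$ for a suitable $q>3/\alpha$ from the decay forced by \eqref{eq5_2} and invoke Lemma \ref{lem3_atsuhide} for $H_0$-compactness, factor the Birman--Schwinger kernel through the weighted threshold resolvent of Lemma \ref{lem4_atsuhide}, and conclude via Corollary \ref{coreBS}. Your remark that $K_0$ should be understood as the operator-norm limit of $|V|^{1/2}(H_0+\lambda)^{-1}|V|^{1/2}$ as $\lambda\downarrow0$ is a useful explicitation of what the paper leaves tacit. One caution worth recording: both you and the paper read off $\|K_0\|<1$ directly from \eqref{eq5_2}, but with $a=\|\langle x\rangle^s|V|^{1/2}\|_\infty$ and $b=\|\langle x\rangle^{-s}R(\pm i0)\langle x\rangle^{-s}\|$ the factorization yields $\|K_0\|\leq a^2 b$, which is $<1$ exactly when $a<b^{-1/2}$; the exponent $+1/2$ in \eqref{eq5_2} therefore appears to be a typographical slip for $-1/2$, and is not a defect of your argument.
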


\begin{proof}
Since
\begin{equation*}
\langle x\rangle^{2sq}|V(x)|^q\leq\|\langle x\rangle^s|V|^{1/2}\|_{\infty}^{2q}
\end{equation*}
and $s>1$, $V\in L^q(\RR^3)$ for any $q>3/2$. Therefore, $V$ is $H_0$ -compact by Lemma \ref{lem3_atsuhide}.
We obtain
\begin{equation*}
\||V|^{1/2}\Phi(p^2/2)^{-1}|V|^{1/2}\|\leq\||V|^{1/2}\langle x\rangle^s\|_\infty \|\langle x\rangle^{-s}R(\pm i0)
\langle x\rangle^{-s}\| \|\langle x\rangle^s|V|^{1/2}\|_\infty<1
\end{equation*}
by \eqref{eq5_2}.
Then the extended Birman-Schwinger principle and Corollary \ref{coreBS} complete the proof.
\end{proof}

\begin{example}\label{ex5_atsuhide}
{\rm
Let $H=(-\Delta+m^{2/\alpha})^{\alpha/2}-m+V$ with $1<\alpha<2$ and $m>0$, and let
\begin{equation*}
V(x)=-\frac{C}{(1+|x|^2)^\nu}
\end{equation*}
with $\nu>1$ and $C>0$ sufficiently small that satisfy the conclusion of Example \ref{ex_itaru}. In addition,
if $C>0$ satisfies $0<C<\|\langle x\rangle^{-\nu}R(\pm i0)\langle x\rangle^{-\nu}\|^{1/2}$, then $\Spec_{\rm p}(H)
= \emptyset$ in virtue of Example \ref{ex_itaru} and Theorem \ref{thm5_atsuhide}.
}
\end{example}

\section{Existence vs absence of at-edge eigenvalues in function of the coupling constant}

Finally we present a first case where by a combination of the above results we can prove the existence/absence
of zero-eigenvalue dichotomy for fractional Schr\"odinger operators in function of the magnitude of the coupling
constant. Due to the simple form of the potential this can be seen as a prototype case, and further examples can
be constructed in a similar way.

As already mentioned in the Introduction, in \cite{JL18} we have obtained the following result on the existence of embedded
eigenvalues at zero. Let $P$ be a harmonic polynomial, homogeneous of degree $l \geq 0$, i.e., such that $P(kx) = k^l P(x)$
for all $k>0$, and $\Delta P = 0$. Also, let $a, b, c \in \mathbb C$, with $c \notin \mathbb Z_{\leq 0}$, and consider Gauss'
hypergeometric function
\begin{equation*}
_2\textbf{F}_1\left( \left. \begin{array}{c}
				a \quad b \\ c
			\end{array}  \right| z \right)
= \sum_{k=0}^{\infty} \frac{(a)_{k} (b)_{k} }{\Gamma(c+k)} \cdot \frac{z^k}{k !}, \quad z \in \mathbb C,
\end{equation*}
with the Pochhammer symbol $(a)_k = a(a+1) \cdots (a+k-1)$ and the usual Gamma function $\Gamma$. Let $\kappa>0$. Then
with $\delta = d+2l$, $d\geq 1$, the family of decaying potentials
\begin{equation}
\label{exist}
V_{\kappa,\alpha}(x) =  -\frac{2^\alpha}{\Gamma(\kappa)} \Gamma\left(  \frac{\delta+\alpha}{2}\right)
\Gamma\left( \frac{\alpha}{2} + \kappa \right)(1+|x|^2)^\kappa \,_2\textbf{F}_1\left( \left. \begin{array}{c}
				\frac{\delta+\alpha}{2} \quad \frac{\alpha}{2} + \kappa \\ \frac{\delta}{2}
			\end{array}  \right| -|x|^2 \right),
\end{equation}
generate zero-eigenvalues with the eigenfunctions
\begin{equation}
\label{exist2}
\varphi_\kappa(x) = \frac{P(x)}{(1+|x|^2)^{\kappa}},
\end{equation}
i.e.,
\begin{equation}
\label{exist3}
(-\Delta)^{\alpha/2} \varphi_\kappa + V_{\kappa,\alpha} \varphi_\kappa = 0
\end{equation}
holds in distributional sense, for all $0 < \alpha < 2$. Next we will study a sub-family of these potentials.

\begin{theorem}
\label{dicho}
Let $d=3$ and consider the potential
$$
V_\alpha(x) = -\frac{C_\alpha}{\left(1+|x|^2\right)^{\alpha}}
$$
where $C_\alpha > 0$, and the non-local Schr\"odinger operator $H_\alpha = (-\Delta)^{\alpha/2} + V_\alpha$ on
$L^2(\R^d)$. The following occur:
\begin{enumerate}
\item[(1)]
If $\frac{3}{2} <\alpha<2$, then there exist $A(\alpha)>0$ such that $(-\infty,0]\cap\Spec_{\rm p}(H_\alpha)
$ is empty whenever $C_\alpha<A(\alpha)$, in particular, $0 \not \in \Spec_{\rm p}(H_\alpha)$.
Moreover, $A(\alpha)$ satisfies
\begin{equation}
A(\alpha)\geq\frac{1}{4\pi C_{3,\alpha}I_\alpha}\sqrt{\frac{\alpha}{6}
\left(\frac{3(2^d-1)}{10\sqrt{10}}I_\alpha\right)^{2-\frac{2\alpha}{3}}\left(\frac{3}{\alpha}-1\right)^{1-\frac{\alpha}{3}}},
\label{dicho_1}
\end{equation}
where $C_{3,\alpha}=\frac{\Gamma((3-\alpha)/2)}{\pi^{3/2}2^\alpha\Gamma(\alpha/2)}$ and $I_\alpha=\int_0^\infty r^2\langle r\rangle^{-2\alpha}dr$.

\item[(2)]
If $0 < \alpha < 2$ and $C_\alpha = 2^{\alpha} \frac{\Gamma\left(  \frac{\delta+\alpha}{2}\right)}
{\Gamma\left(\frac{\delta-\alpha}{2}\right)}$ with $\delta = 3 + 2\ell$ for $\ell \in \{0\}\cup \mathbb N$, then
$0 \in \Spec_{\rm p}(H_\alpha)$.
\end{enumerate}
Furthermore, we have the following properties:
\begin{enumerate}
\item[(3)]
If $0<\alpha<2$ and $C_\alpha<a$ for $a>0$, then $\Spec_{\rm p}(H_\alpha) \cap (a,\infty) = \emptyset$. Moreover,
if $\frac{3}{2} <\alpha<2$ and $C_\alpha< \min\{a,A(\alpha)\}$, where $A(\alpha)$ is chosen as in (1) above, then
$\Spec_{\rm p}(H_\alpha)\subset(0,a]$.

\item[(4)]
If $0 < \alpha < 2$, then there exists a constant $B(\alpha) > 0$ such that
$\Spec_{\rm d}(H_\alpha)$ is not empty whenever $C_\alpha > B(\alpha)$.
\end{enumerate}
\end{theorem}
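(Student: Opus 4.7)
The plan is to address the four parts separately, drawing on Corollary \ref{cor1_atsuhide} for Part~(1), the construction \eqref{exist}--\eqref{exist3} for Part~(2), the virial identity from the Mourre framework of Section~3 for Part~(3), and the min--max principle for Part~(4). For Part~(1), I would substitute $\|V_\alpha\|_\infty=C_\alpha$ and $\|V_\alpha\|_1=4\pi C_\alpha I_\alpha$, with $I_\alpha=\int_0^\infty r^2\langle r\rangle^{-2\alpha}dr$ (convergent precisely because $\alpha>3/2$, which is where this restriction originates) into the smallness condition \eqref{eq4_2}. The resulting quadratic inequality in $C_\alpha$ has a positive root, furnishing some $A(\alpha)>0$; reproducing the exact expression displayed in \eqref{dicho_1} is the main algebraic obstacle and amounts to a Young-type interpolation between the $\|V_\alpha\|_\infty^2$ and $\|V_\alpha\|_1^2$ summands of \eqref{eq4_2} to expose the displayed $2-2\alpha/3$ exponent.

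For Part~(2), I would specialise \eqref{exist}--\eqref{exist3} by setting $\kappa=(\delta-\alpha)/2$ with $\delta=3+2\ell$. Under this choice the upper parameter $\frac{\alpha}{2}+\kappa$ of the hypergeometric function in \eqref{exist} coincides with its lower parameter $\frac{\delta}{2}$, so Gauss' identity ${}_2\textbf{F}_1(a,b;b;z)=\Gamma(b)^{-1}(1-z)^{-a}$ (in the normalisation employed in \eqref{exist}) applies with $a=(\delta+\alpha)/2$ and collapses the product $(1+|x|^2)^{\kappa}\,{}_2\textbf{F}_1(\cdot\mid-|x|^2)$ to $\Gamma(\delta/2)^{-1}(1+|x|^2)^{-\alpha}$. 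After the $\Gamma(\delta/2)$ factor in the prefactor of \eqref{exist} cancels, $V_{\kappa,\alpha}$ reduces exactly to $-C_\alpha(1+|x|^2)^{-\alpha}$ with the stated value of $C_\alpha$. Square integrability of the corresponding eigenfunction $\varphi_\kappa(x)=P(x)\langle x\rangle^{-(\delta-\alpha)}$ on $\R^3$ reduces to the condition $\alpha<\delta/2$, which is automatic for $\ell\geq 1$ and holds for $\ell=0$ when $\alpha<3/2$.

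For Part~(3), observe first that $V_\alpha$ satisfies (H.6) with $\gamma=2\alpha$, so Lemma \ref{lem2_atsuhide} yields $H_\alpha\in C^1(A)$ for $A=\frac{1}{2}(x\cdot p+p\cdot x)$. The virial identity $\langle\psi,i[H_\alpha,A]^0\psi\rangle=0$ then applies to any normalised eigenpair $H_\alpha\psi=E\psi$, and using $i[(-\Delta)^{\alpha/2},A]=\alpha(-\Delta)^{\alpha/2}$, $i[V_\alpha,A]=-x\cdot\nabla V_\alpha$, together with $\langle\psi,(-\Delta)^{\alpha/2}\psi\rangle=E-\langle\psi,V_\alpha\psi\rangle$, one obtains
\[
E=\Big\langle\psi,\Big(V_\alpha+\tfrac{1}{\alpha}\,x\cdot\nabla V_\alpha\Big)\psi\Big\rangle=C_\alpha\int_{\R^3}|\psi(x)|^2\,\frac{|x|^2-1}{\langle x\rangle^{2\alpha+2}}\,dx\leq C_\alpha,
\]
the last inequality because the integrand is bounded above by $\langle x\rangle^{-2\alpha}\leq 1$. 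Hence any eigenvalue satisfies $E\leq C_\alpha<a$, ruling out $\Spec_{\rm p}(H_\alpha)\cap(a,\infty)$; combining with Part~(1) gives $\Spec_{\rm p}(H_\alpha)\subset(0,a]$ whenever $3/2<\alpha<2$ and $C_\alpha<\min\{a,A(\alpha)\}$.

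For Part~(4), I would invoke the min--max principle with any fixed nonzero trial function $\psi\in C_0^\infty(\R^3)$. The Rayleigh quotient $\|(-\Delta)^{\alpha/4}\psi\|^2/\|\psi\|^2-C_\alpha\int|\psi|^2\langle x\rangle^{-2\alpha}dx/\|\psi\|^2$ becomes strictly negative as soon as $C_\alpha$ exceeds $B(\alpha):=\|(-\Delta)^{\alpha/4}\psi\|^2/\int|\psi|^2\langle x\rangle^{-2\alpha}dx$, so $\inf\Spec(H_\alpha)<0$. Since $V_\alpha$ is $(-\Delta)^{\alpha/2}$-compact (being bounded and vanishing at infinity, cf.\ Example \ref{ex4_atsuhide}) and $\Spec_{\rm ess}(H_\alpha)=[0,\infty)$, any negative spectrum must lie in the discrete component, giving $\Spec_{\rm d}(H_\alpha)\neq\emptyset$.
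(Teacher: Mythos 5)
Parts (2), (3), and (4) are essentially correct; part (1) has a genuine gap in how you propose to obtain the displayed constant.

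For part (3) you use the virial theorem for $C^1(A)$ operators directly on an eigenpair, whereas the paper derives the strict Mourre inequality $f(H_\alpha)i[H_\alpha,A]^0 f(H_\alpha)>a\varepsilon f(H_\alpha)^2$ for $f\in C_0^\infty(a,b)$ and appeals to the consequent absence of eigenvalues. Your route is cleaner — it avoids introducing the auxiliary parameter $\varepsilon$ and the localization $f(H_\alpha)$ — and is fully legitimate since Lemma \ref{lem2_atsuhide} supplies the $C^1(A)$ property, which is exactly what the virial theorem in \cite[Prop. 7.2.10]{AmBoGe} requires. Your computation of $V_\alpha+\alpha^{-1}x\cdot\nabla V_\alpha = C_\alpha(|x|^2-1)\langle x\rangle^{-2\alpha-2}$ and the pointwise bound by $\langle x\rangle^{-2\alpha}\le1$ are correct. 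For part (4) the paper picks a specific trial function (the Dirichlet ground state of a ball) to produce a concrete $B(\alpha)$, while you take an arbitrary $C_0^\infty$ function; both establish the claimed existence. In part (2) you correctly track the hypergeometric reduction, and your observation that $\varphi_\kappa\in L^2(\R^3)$ requires $\alpha<\ell+\tfrac{3}{2}$ (and hence $\alpha<\tfrac{3}{2}$ when $\ell=0$) is a genuine integrability constraint that the paper's proof of (2) leaves implicit; this is a useful sanity check even if it points to a restriction the theorem statement does not record.

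The gap is in part (1). Substituting $\|V_\alpha\|_\infty=C_\alpha$ and $\|V_\alpha\|_1=4\pi C_\alpha I_\alpha$ into \eqref{eq4_2} does give a positive threshold and therefore the qualitative claim, but it will not reproduce the quantitative bound \eqref{dicho_1}. Your proposed "Young-type interpolation between the $\|V_\alpha\|_\infty^2$ and $\|V_\alpha\|_1^2$ summands" is not the mechanism, because \eqref{eq4_2} corresponds to a fixed split radius $\rho=1$ in the decomposition $\int_{B_1(x)}+\int_{\R^d\setminus B_1(x)}$ of \eqref{eq4_3}, and no rearrangement of those two fixed summands produces the exponent $2-\tfrac{2\alpha}{3}$. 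What the paper does instead is re-run the proof of Corollary \ref{cor1_atsuhide} with $B_\rho(x)$ in place of $B_1(x)$: the near-field term picks up a factor $\rho^{2\alpha}$ via Hedberg's lemma and the Hardy--Littlewood maximal function, and the far-field term picks up a factor $\rho^{2\alpha-6}$ from bounding $|x-y|^{-(3-\alpha)}$ by $\rho^{-(3-\alpha)}$. Minimizing the resulting expression $f(\rho)=\tfrac{1000}{9(2^d-1)^2}\rho^{2\alpha}+I_\alpha^2\rho^{2\alpha-6}$ over $\rho>0$ is what produces the $\rho_{\rm min}$ and hence the exponent $2-\tfrac{2\alpha}{3}$ and the factor $(3/\alpha-1)^{1-\alpha/3}$ in \eqref{dicho_1}. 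Without this variable-radius optimization your proof of (1) establishes existence of $A(\alpha)>0$ but leaves the "Moreover" clause unproven.
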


\begin{proof}
(1) For $\rho>0$, it follows from
\begin{equation*}
\|\langle x\rangle^{-\alpha}\Phi(p^2/2)^{-1}\langle x\rangle^{-\alpha}\psi\|^2\leq 32\pi^2C_{3,\alpha}^2
\left(\frac{1000}{9(2^d-1)^2}\rho^{2\alpha}+I_\alpha^2\rho^{2\alpha-6}\right)\|\psi\|^2.
\end{equation*}
that $\langle x\rangle^{-\alpha}\Phi(p^2/2)^{-1}\langle x\rangle^{-\alpha}$ is bounded by replacing $B_1(x)$ with
$B_\rho(x)$ of \eqref{eq4_3} in the proof of Corollary \ref{cor1_atsuhide}. Define $A(\alpha)=
1/\|\langle x\rangle^{-\alpha}\Phi(p^2/2)^{-1}\langle x\rangle^{-\alpha}\|$. Then
\begin{equation*}
\||V_\alpha|^{1/2}\Phi(p^2/2)^{-1}|V_\alpha|^{1/2}\|=C_\alpha\|\langle x\rangle^{-\alpha}\Phi(p^2/2)^{-1}\langle x\rangle^{-\alpha}\|<1
\end{equation*}
holds for $C_\alpha<A(\alpha)$. To estimate $A(\alpha)$, note that the minimiser of the function
$f(\rho)=1000\rho^{2\alpha}/(9(2^d-1)^2)+I_\alpha^2\rho^{2\alpha-6}$ is
\begin{equation*}
\rho_{\rm min}=\left(\frac{3(2^d-1)}{10\sqrt{10}}I_\alpha\right)^{\frac{1}{3}}\left(\frac{3}{\alpha}-1\right)^{\frac{1}{3}},
\end{equation*}
so that we have
\begin{equation*}
f(\rho_{\rm min})=\frac{3I_\alpha^2}{\alpha}\left(\frac{3(2^d-1)}{10\sqrt{10}}
I_\alpha\right)^{\frac{2\alpha}{3}-2}\left(\frac{3}{\alpha}-1\right)^{\frac{\alpha}{3}-1}.
\end{equation*}
This implies \eqref{dicho_1}.

\medskip
\noindent
(2)  Choosing $\kappa = \frac{\delta-\alpha}{2}$ in \eqref{exist}, we have
\begin{eqnarray*}
\lefteqn{
V_{(\delta-\alpha)/2,\alpha}(x) } \\
&=&
-\frac{2^\alpha}{\Gamma\left(\frac{\delta-\alpha}{2}\right)}
\Gamma\left(\frac{\delta+\alpha}{2}\right)\Gamma\left(\frac{\alpha}{2}+\frac{\delta-\alpha}{2}\right)
(1+|x|^2)^{\frac{\delta-\alpha}{2}}
\,_2\textbf{F}_1\left( \left.
\begin{array}{c} \frac{\delta+\alpha}{2} \,,\, \frac{\alpha}{2}+\frac{\delta-\alpha}{2} \\ \frac{\delta}{2}
\end{array} \right| -|x|^2 \right).
\end{eqnarray*}
Using the identity
\begin{equation*}
\frac{(1-z)^{-a}}{\Gamma(b)} =\,
_2\textbf{F}_1\left( \left. \begin{array}{c} a \,,\, b \\ b \end{array} \right| z \right)
\end{equation*}
see \cite[eq. (15.4.6)]{NIST}  we readily obtain
$$
V_{\alpha}(x) \equiv  V_{(\delta-\alpha)/2,\alpha}(x) = -2^{\alpha} \frac{\Gamma\left(  \frac{\delta+\alpha}{2}\right)}
{\Gamma(\tfrac{\delta-\alpha}{2})}{\left(1+|x|^2\right)^{-\alpha}},
$$
i.e., a zero eigenvalue exists due to \eqref{exist}-\eqref{exist3}.

\medskip
\noindent
(3) It is sufficient to prove the first half of the statement. As proved in Lemma \ref{lem2_atsuhide},
the commutator $[H_\alpha,A]^0$ is well-defined and $H_\alpha$ belongs to class $C^1(A)$. Take $\varepsilon=
(a-C_\alpha)\alpha/(a+C_\alpha)>0$. We compute, for $f\in C_0^\infty(a,b)$ with $a<b<\infty$,
\begin{align*}
& f(H_\alpha)i[H_\alpha,A]^0 f(H_\alpha)=f(H_\alpha)
\left(\alpha\Psi(p^2/2)-2\alpha C_\alpha|x|^2\langle x\rangle^{-2\alpha-2}\right)f(H_\alpha)\\
& \quad=
\varepsilon f(H_\alpha)\Psi(p^2/2)f(H_\alpha)\\
&\qquad
+f(H_\alpha)\left((\alpha-\varepsilon)H_\alpha+(\alpha-\varepsilon)C_\alpha\langle x\rangle^{-2\alpha}-2\alpha C_\alpha|x|^2
\langle x\rangle^{-2\alpha-2}\right)f(H_\alpha).
\end{align*}
By the estimates
\begin{align*}
& a(\alpha-\varepsilon)+(\alpha-\varepsilon)C_\alpha\langle x\rangle^{-2\alpha}-2\alpha
C_\alpha|x|^2\langle x\rangle^{-2\alpha-2}\\
&\quad=
\langle x\rangle^{-2\alpha-2}\left(a(\alpha-\varepsilon)\langle x\rangle^{2\alpha+2}-(\alpha+\varepsilon)
C_\alpha\langle x\rangle^2+2\alpha C_\alpha\right)>2\alpha C_\alpha\langle x\rangle^{-2\alpha-2}>0,
\end{align*}
noting that $a(\alpha-\varepsilon)=(\alpha+\varepsilon)C_\alpha$ and $(\alpha+\varepsilon)C_\alpha(\langle x\rangle^{2\alpha+2}
-\langle x\rangle^2)>0$, we obtain
\begin{equation*}
f(H_\alpha)i[H_\alpha,A]^0 f(H_\alpha)>\varepsilon f(H_\alpha)\Psi(p^2/2)f(H_\alpha)>a\varepsilon f(H_\alpha)^2.
\end{equation*}
The strict Mourre estimate (see \eqref{strict_mourre}) implies the absence of eigenvalues in $(a,b)$.

\medskip
\noindent
(4) We show that for sufficiently large $C_\alpha$ the discrete spectrum is non-empty. Recall that the Dirichlet
form $(\mathcal E, \Dom(\mathcal E))$ of $(-\Delta)^{\alpha/2}$ is given by
$$
\mathcal E(f,g) = \int_{\R^d} |y|^\alpha \widehat{f}(y) \overline{\widehat{g}(y)} dy, \quad f, g \in \Dom(\mathcal E),
$$
where $\Dom(\mathcal E)=\big\{f \in L^2(\R^d): \int_{\R^d} |y|^\alpha |\widehat{f} (y)|^2 dy < \infty \big\}$. Since
the potential $V_\alpha$ is decaying at infinity, it is known that $\Spec(H_\alpha) = \Spec_{\rm ess}(H_\alpha) \cup
\Spec_{\rm d}(H_\alpha)$, where $\Spec_{\rm ess}(H_\alpha) = \Spec_{\rm ess}((-\Delta)^{\alpha/2}) = [0 ,\infty)$,
$\Spec_{\rm d}(H_\alpha) \subset (-\infty, 0)$, and $\Spec_{\rm d}(H_\alpha)$ consists of isolated eigenvalues of finite
multiplicity whenever it is non-empty. Also, by general arguments, whenever there exists $f \in \Dom(\cE)$ such that
$\mathcal E(f,f) + \int_{\R^d} V(x)f^2(x)dx <0$, it follows that $\Spec_{\rm d}(H_\alpha) \neq \emptyset$. To choose
a suitable test function, let $f_r$ and $\lambda_r$ be the principal Dirichlet eigenfunction and eigenvalue,
respectively, for the fractional Laplacian for a ball $B_r(0)$ centered in the origin, and denote the corresponding
quadratic form  by $(\cE_0, \Dom(\cE_0))$. Then we have $\supp f_r = B_r(0)$, $\left\|f_r\right\|_2=1$, and $f_r$ is a
non-increasing radial function by \cite[Def. 1.3, Cor. 2.3]{AL89}. Since $\cE(f_r,f_r) \leq c \cE_0(f_r,f_r) = c
\lambda_r$, with a constant $c>0$, we get
$$
\cE(f_r,f_r) - C_\alpha \int_{\R^d} \frac{f_r^2(x)}{(1+|x|^2)^\alpha}dx
\leq
c \lambda_r - C_\alpha \int_{B_r(0)} \frac{f_r^2(x)}{(1+|x|^2)^\alpha}dx
\leq
c \lambda_r - \frac{C_\alpha}{(1+r^2)^\alpha}.
$$
Thus by choosing $C_\alpha > B(\alpha) = c \inf_{r\geq 0}\lambda_r (1+r^2)^\alpha$, we have at least one negative
eigenvalue.
\end{proof}

\begin{remark}
\label{rem3_atsuhide}
{\rm
We note that the number of negative eigenvalues for the potential in Theorem \ref{dicho} is finite,
in particular, zero is not an accumulation point. Indeed, by the Lieb-Thirring inequality \cite{HL}
we have for the number $N_0$ of negative eigenvalues
\begin{equation*}
N_0 \leq L_\alpha \int_{\R^d} |V_\alpha(x)|^{3/\alpha}dx = 4\pi L_\alpha\ome_dC_\alpha^{3/\alpha}I_3,
\end{equation*}
where $L_\alpha>0$ and $I_3$ denotes $I_\alpha$ with $\alpha$ extended to $3$.}
\end{remark}

\begin{example}\label{ex6_atsuhide}
{\rm
Let $\frac{3}{2} <\alpha<2$ for $d=3$, and
\begin{equation*}
V(x)=-\frac{C}{(1+|x|^2)^{\beta/2}}
\end{equation*}
with $C>0$ and $0<\beta\leq\alpha$. If $C<A(\beta/2)$ with the same constant as in (1) of Theorem \ref{dicho},
then $H=(-\Delta)^{\alpha/2}+V$ has no eigenvalue in virtue of Examples \ref{EX3_edited_by_atsuhide}, \ref{ex4_atsuhide} and (1) of Theorem \ref{dicho}.}
\end{example}

\bigskip
\noindent
\textbf{Acknowledgments}
AI thanks JSPS KAKENHI (Grant Numbers JP20K03625, JP21K03279 and JP21KK0245) and Tokyo University of Science Grant for
International Joint Research for support. JL gratefully thanks the hospitality of IHES, Bures-sur-Yvette, and both thank
the Isaac Newton Institute for Mathematical Sciences, Cambridge, for support (EPSRC grant no EP/K032208/1) and hospitality
during the programme ``Fractional Differential Equations"  where work on this paper was undertaken. IS thanks JSPS KAKENHI
(Grant Numbers JP16 K17612 and JP20K03628) for support. The authors thank Professor Masahito Ohta of
Tokyo University of Science for his valuable comments, and also thank reviewers.

\end{document}